\newcommand{\doi}[1]{\url{https://doi.org/#1}}
\title[Rationality of the growth series of virtually abelian groups]
{Rationality of the multivariate growth series for algebraic sets of virtually abelian groups}
\author{Yusuke Nakamura}
\address{Graduate School of Mathematics, Nagoya University, Furo-cho, Chikusa-ku, Nagoya, 464-8602, Japan.}
\email{y.nakamura@math.nagoya-u.ac.jp}
\urladdr{https://sites.google.com/site/ynakamuraagmath/}
\newtheorem{thm}{Theorem}[section]
\newtheorem{lem}[thm]{Lemma}
\newtheorem{prop}[thm]{Proposition}
\theoremstyle{definition}
\newtheorem{defi}[thm]{Definition}
\newtheorem{nota}[thm]{Notation}
\theoremstyle{remark}
\newtheorem{rmk}[thm]{Remark}
\newtheorem*{ackn}{Acknowledgements}
\begin{document}
\subjclass[2020]{Primary 20F65; Secondary 05A15, 05E16}
\keywords{virtually abelian groups, periodic graphs, growth series, equations in groups}

\begin{abstract}
We prove the rationality of the multivariate relative growth series for algebraic sets of virtually abelian groups, which had been conjectured by Evetts and Levine. 
\end{abstract}

\maketitle

\section{Introduction}

Let $G$ be a group, $S \subset G$ be a finite subset, and 
$\omega: S \to \mathbb{Z}_{>0}$ be a positive integer weight. 
Let $S^*$ be the set of all words from $S$. 
For a word $\sigma = s_1 \cdots s_{\ell} \in S^*$, we define $\omega(\sigma) := \sum_{i=1} ^{\ell} \omega(s_i)$. 
For $g \in G$, we define $\omega(g)$ as the smallest $\omega(\sigma)$ for all words $\sigma$ that represent $g$. 
The \textit{growth series} $\mathbb{S}_{G, S,\omega} (t)$ of $G$ with respect to $S$ and $\omega$ is defined as a formal power series
\[
\mathbb{S}_{G, S, \omega}(t) := \sum _{i \ge 0} f_{G, S, \omega}(i) t^i \in \mathbb{Z}[[t]], 
\]
where $f_{G,S,\omega}(i) := \# \{ g \in G \mid \omega(g) = i \}$. 
In \cite{Ben83}, Benson proved that the growth series $\mathbb{S}_{G, S, \omega}(t)$ is a rational function when $G$ is a virtually abelian group. 

An \textit{$n$-dimensional periodic graph} $(\Gamma, L)$ is a pair of a directed graph $\Gamma$ (that may have loops and multiple edges) and a free abelian group $L$ of rank $n$ such that $L$ freely acts on $\Gamma$ and its quotient graph $\Gamma/L$ is finite (see Definition \ref{defi:pg}). 
For a vertex $x_0$ of $\Gamma$, 
the \textit{growth series} $\mathbb{S}_{\Gamma, x_0} (t)$ of $\Gamma$ with respect to $x_0$ is defined by 
\[
\mathbb{S}_{\Gamma, x_0}(t) := \sum _{i \ge 0} f_{\Gamma, x_0}(i) t^i \in \mathbb{Z}[[t]], 
\]
where $f_{\Gamma, x_0}(i)$ is defined as the number of vertices of $\Gamma$ whose distance from $x_0$ is $i$. 
The sequence $(f_{\Gamma, x_0}(i))_i$ is studied in both areas of crystallography and combinatorics (cf.\ \cite{GS19}*{Introduction}). 
Furthermore, this is also studied in relation to the Ehrhart theory (cf.\ \cite{IN1, IN2}). 

In \cite{GKBS96}, Grosse-Kunstleve, Brunner and Sloane conjectured that the growth series $\mathbb{S}_{\Gamma, x_0} (t)$ of periodic graphs $\Gamma$ are always rational functions. 
In \cite{NSMN21}, the author, Sakamoto, Mase and Nakagawa proved that this conjecture is true (see Theorem \ref{thm:NSMN}). 
This result is a generalization of Benson's theorem since the Cayley graphs of virtually abelian groups are periodic graphs (see Remark \ref{rmk:benson}). 
In the proof of Theorem \ref{thm:NSMN}, they use the commutative monoid theory, and the proof is essentially different from Benson's original proof of Theorem \ref{thm:Benson}, where he uses his theory of ``polyhedral sets". 

The rationality of the growth series $\mathbb{S}_{G, S, \omega}(t)$ is also proved for hyperbolic groups in \cites{Can84, Gro87} and for the integer Heisenberg group in \cite{DS19}. 
Furthermore, some variants of growth series, such as the conjugacy growth series and the relative growth series, have been studied for many groups (cf.\ \cites{GS10, DO15, AC17, CHHR16, Eve19}). 

In \cite{EL22}, Evetts and Levine generalize Benson's theorem to the univariate relative growth series for algebraic sets of virtually abelian groups. 
For a positive integer $d$, a subset $U \subset G^d$ is called \textit{algebraic} if $U$ is the set of solutions of a finite system of equations with $d$ variables whose coefficients are in $G$ (see Definition \ref{defi:as}). 
The \textit{univariate (relative) growth series} $\mathbb{S}^{\rm uni}_{U, S, \omega}(t)$ of $U$ with respect to $S$ and $\omega$ is defined by
\[
\mathbb{S}^{\rm uni}_{U, S, \omega}(t) := \sum _{{\bf x} \in U} t^{\omega ({\bf x})} \in \mathbb{Z}[[t]], 
\]
where $\omega ({\bf x}) := \sum _{i = 1} ^{\ell} \omega (x_i)$. 
Furthermore, the \textit{multivariate (relative) growth series} $\mathbb{S}_{U, S, \omega}({\bf z})$ is defined by
\[
\mathbb{S}_{U, S, \omega}({\bf z}) := \sum _{{\bf x} \in U} z_1^{\omega(x_1)} z_2^{\omega(x_2)} \cdots z_d^{\omega(x_d)} \in \mathbb{Z}[[z_1, \ldots, z_d]]. 
\]
In \cite{EL22}, for any algebraic sets of any virtually abelian groups, the univariate growth series are proved to be rational functions (\cite{EL22}*{Theorem 4.3}), and furthermore, the multivariate growth series are proved to be holonomic functions (\cite{EL22}*{Corollary 4.21}). 
However, the rationality of the multivariate growth series remained a conjecture (\cite{EL22}*{Conjecture 5.1}). 
The purpose of this paper is to give an affirmative answer to this conjecture. 
\begin{thm}[$=$ Theorem \ref{thm:main}]\label{thm:main_i}
Let $G$ be a finitely generated virtually abelian group, $S \subset G$ a finite subset, and $\omega$ a positive integer weight. 
Then, for any algebraic subset $U \subset G^d$ of $G$, its multivariate growth series $\mathbb{S}_{Y, S, \omega}({\bf z})$ is a rational function. 
\end{thm}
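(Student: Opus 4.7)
The plan is to reduce the problem to counting lattice points in rational polyhedra with piecewise linear weights, then invoke the multivariate theory of polyhedral generating functions. The approach combines Benson's polyhedral description of the weight function (which also underlies Theorem \ref{thm:NSMN}) with a coset decomposition of $G^d$ relative to a finite-index free abelian subgroup.

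Fix a normal free abelian subgroup $H \trianglelefteq G$ of finite index and rank $n$, and let $T \subset G$ be a set of coset representatives. Then $G^d = \bigsqcup_{\vec{t} \in T^d} \vec{t} H^d$, so $U = \bigsqcup_{\vec{t}} U_{\vec{t}}$ with $U_{\vec{t}} := U \cap \vec{t} H^d$. Since a finite sum of rational functions is rational, it suffices to prove rationality of each $\mathbb{S}_{U_{\vec{t}}, S, \omega}({\bf z})$. Substituting $x_i = t_i h_i$ into the defining word equations for $U$ and using that $G/H$ acts on $H \cong \mathbb{Z}^n$ by automorphisms (since $H$ is abelian and normal), every conjugate $c h_i^{\pm 1} c^{-1}$ rewrites as $\pm \alpha(c)(h_i)$ via the conjugation homomorphism $\alpha: G \to \mathrm{Aut}(H) = \mathrm{GL}_n(\mathbb{Z})$. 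Thus each defining equation becomes an affine linear condition in $\vec{h} = (h_1, \ldots, h_d) \in H^d$, and $U_{\vec{t}}$ is identified with a (possibly empty) coset of a $\mathbb{Z}$-sublattice of $\mathbb{Z}^{nd}$.

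Next, for each $t \in T$ the function $\omega_t : H \to \mathbb{Z}_{\ge 0}$ defined by $\omega_t(h) := \omega(th)$ is piecewise affine linear with respect to a finite rational polyhedral decomposition of $\mathbb{R}^n$, by Benson's polyhedral analysis or equivalently by the periodic-graph structure underlying Theorem \ref{thm:NSMN}. Taking a common refinement across all $t \in T$ and then the product decomposition across the $d$ coordinates yields a finite rational polyhedral decomposition of $\mathbb{R}^{nd}$ on each piece $P$ of which the tuple $(\omega_{t_1}(h_1), \ldots, \omega_{t_d}(h_d))$ agrees with a tuple of nonnegative affine linear forms $(L_1(\vec{h}), \ldots, L_d(\vec{h}))$. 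The intersection $P \cap U_{\vec{t}}$ is again a rational polyhedron, and its contribution to $\mathbb{S}_{U_{\vec{t}}, S, \omega}({\bf z})$ is
\[
\sum_{\vec{h} \in P \cap U_{\vec{t}} \cap \mathbb{Z}^{nd}} \prod_{i=1}^d z_i^{L_i(\vec{h})},
\]
a multivariate generating function of the integer points of a rational polyhedron, weighted by nonnegative integer linear forms. Such a function is rational by the multivariate Brion--Barvinok theory (obtained from Barvinok's theorem via the monomial substitution determined by the $L_i$). Summing over the finitely many pieces yields rationality of $\mathbb{S}_{U_{\vec{t}}, S, \omega}({\bf z})$, and hence of $\mathbb{S}_{U, S, \omega}({\bf z})$.

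The main obstacle lies in the compatibility between the simultaneous polyhedral refinement and the final multivariate step: one must verify that the common refinement of the $d$ distinct polyhedral decompositions is still rational polyhedral, that every $L_i$ can be taken to be nonnegative on each piece (so that the multivariate sum converges as a formal power series), and that rationality is genuinely preserved under the relevant monomial substitution. The univariate result in \cite{EL22} effectively collapses the $d$ weights into a single linear form $L_1(\vec{h}) + \cdots + L_d(\vec{h})$ on each piece, whereas the multivariate statement forces one to control each $L_i$ independently. This separation is precisely what takes the argument beyond the holonomicity result \cite{EL22}*{Corollary 4.21} to full rationality.
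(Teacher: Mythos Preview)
Your overall strategy---reduce to cosets of a finite-index free abelian subgroup, linearise the defining equations, decompose into polyhedral pieces on which the weight is affine, then invoke multivariate lattice-point rationality---is a natural extension of the univariate argument in \cite{EL22} and is quite different from the paper's route. The paper instead passes to the Cayley graph as a periodic graph, works with the graded growth set $B=\{(i,y):d_\Gamma(x_0,y)\le i\}$, decomposes $B$ (and hence $B^d$) into finitely generated modules over finitely generated submonoids of $\mathbb{Z}_{\ge 0}\times L$ (Theorem~\ref{thm:NSMN_decomp}), and then applies the Hilbert--Serre rationality theorem after inclusion--exclusion. The monoid-theoretic approach never isolates the weight function as a piecewise linear object; it works directly with the ``thickened'' set $B$.

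There is, however, a genuine gap in your Step~3. The claim that $\omega_t(h)=\omega(th)$ is piecewise affine linear with respect to a finite rational polyhedral decomposition of $\mathbb{R}^n$ is false in general. Take $G=H=\mathbb{Z}$, $S=\{\pm 2,\pm 3\}$ with unit weights; then for $h\ge 2$ one has $\omega(h)=\lceil h/3\rceil$, which is not affine on any unbounded interval. What Benson's analysis actually yields is that $\omega_t$ is affine on pieces defined by linear inequalities \emph{together with congruence conditions}---i.e.\ on polyhedral sets in Benson's sense, not on ordinary rational polyhedra. Equivalently, one must first refine by residue classes modulo some sublattice $mH$ before the weight becomes genuinely piecewise affine. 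This is precisely why the growth sequence is quasi-polynomial rather than polynomial (Theorem~\ref{thm:NSMN}), and it is the content behind the ``coset-wise polyhedral'' structure in \cite{EL22}*{Corollary 4.16} that the paper packages as Proposition~\ref{prop:decomp}.

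The gap is repairable: after refining by congruence classes, each piece is a translate of a sublattice intersected with a rational polyhedron, and your Barvinok-type argument can then proceed (with some care that the monomial substitution determined by the $L_i$ avoids poles, which follows from properness of the weight). But as written, the key structural input is misstated, and fixing it requires exactly the kind of congruence bookkeeping that Benson's polyhedral sets---or the paper's monoid-module decomposition---are designed to absorb.
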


Theorem \ref{thm:main_i} is a consequence of Theorem \ref{thm:main_p}, which is the corresponding theorem for periodic graphs. 
The idea of the proof of Theorem \ref{thm:main_p} is to use an algebraic structure on the \textit{graded growth set} 
\[
B := \{ (i, y) \in \mathbb{Z}_{\ge 0} \times V_{\Gamma} \mid d_{\Gamma}(x_0, y) \le i \} \subset \mathbb{Z}_{\ge 0} \times V_{\Gamma}, 
\]
which is studied in \cite{NSMN21} (see Appendix \ref{section:NSMN}). 
In \cite{IN2}, further structure of $B$, which is not used in this paper, is studied as an analogy to the Ehrhart ring.
We believe that these ideas have potential for further applications to similar studies of virtually abelian groups.

The paper is organized as follows. 
In Section \ref{section:pre}, we introduce some notations on graphs and groups, and we also summarize facts on commutative monoid theory which will be used in Section \ref{section:proof}. 
In Section \ref{section:proof}, we prove Theorem \ref{thm:main_p}. 
In Section \ref{section:proof2}, we prove Theorem \ref{thm:main}.
In Appendix \ref{section:NSMN}, we explain the algebraic structure of the graded growth set $B$ proved in \cite{NSMN21}. 
The proofs in Appendix \ref{section:NSMN} are essentially the same as those in \cite{NSMN21}, but streamlined for mathematicians.

\begin{ackn}
The author would like to thank Professor Takeo Uramoto for pointing him to the connection between the paper \cite{NSMN21} and the geometric group theory of virtually abelian groups.
He would also like to thank Takuya Inoue for many discussions. 
The author is partially supported by JSPS KAKENHI No.\ 18K13384 and 22K13888. 
\end{ackn}

\section{Preliminaries}\label{section:pre}

\begin{nota}\hfill
\begin{itemize}
\item For a set $X$, $\# X $ denotes its cardinality, and $2^{X}$ denotes its power set. 
\end{itemize}
\end{nota}

\subsection{Growth series of periodic graphs}\label{subsection:pg}
In this paper, a \textit{graph} means a directed weighted graph which may have loops and multiple edges. 
More precisely, a graph $\Gamma = (V_{\Gamma}, E_{\Gamma}, s_{\Gamma}, t_{\Gamma}, w_{\Gamma})$ 
consists of the set $V_{\Gamma}$ of vertices, the set $E_{\Gamma}$ of edges, 
the source function $s_{\Gamma}: E_{\Gamma} \to V_{\Gamma}$, 
the target function $t_{\Gamma}: E_{\Gamma} \to V_{\Gamma}$, and 
the weight function $w_{\Gamma}: E_{\Gamma} \to \mathbb{Z}_{> 0}$. 
We abbreviate $s_{\Gamma}$, $t_{\Gamma}$ and $w_{\Gamma}$ to $s$, $t$ and $w$ 
when no confusion can arise. 

Let $\Gamma$ be a graph. 
A \textit{walk} $p$ is a sequence $e_1 e_2 \cdots e_{\ell}$ of edges $e_i \in V_{\Gamma}$ satisfying $t(e_i) = s(e_{i+1})$ for all $1 \le i \le \ell -1$. 
We define the \textit{weight} $w(p)$ of $p$ by $w(p) := \sum _{i = 1} ^{\ell} w(e_i)$. 
For $x, y \in V_{\Gamma}$, we define $d_{\Gamma} (x,y) \in \mathbb{Z}_{\ge 0} \cup \{ \infty \}$ as the smallest weight $w(p)$ of any walk $p = e_1 e_2 \cdots e_{\ell}$ with $x = s(e_1)$ and $y = t(e_{\ell})$. 
Then, for $x_0 \in V_{\Gamma}$ and $i \in \mathbb{Z}_{\ge 0}$, 
$f_{\Gamma, x_0}(i) \in \mathbb{Z}_{\ge 0}$ is defined as
\[
f_{\Gamma, x_0}(i) := \# \{ y \in V_{\Gamma} \mid d_{\Gamma}(x_0, y) = i \}. 
\]
The sequence $(f_{\Gamma, x_0}(i))_i$ is called the \textit{growth sequence} of $\Gamma$ with respect to $x_0$. 
The \textit{growth series} $\mathbb{S}_{\Gamma, x_0} (t)$ of $\Gamma$ with respect to $x_0$ is defined as its generating function 
\[
\mathbb{S}_{\Gamma, x_0}(t) := \sum _{i \ge 0} f_{\Gamma, x_0}(i) t^i. 
\]

\begin{defi}[cf.\ \cite{IN1}]\label{defi:pg}
\begin{enumerate}
\item
Let $\Gamma$ be a graph, and let $G$ be a group. 
A \textit{$G$-action} on $\Gamma$ means $G$-actions on $V_\Gamma$ and $E_\Gamma$ which preserve the edge relations and the weight function, i.e.,\ for any $u \in G$ and $e \in E_\Gamma$, we have 
\[
s_\Gamma(u(e))=u(s_\Gamma(e)), \quad t_\Gamma(u(e))=u(t_\Gamma(e)), \quad w_{\Gamma}(u(e))=w_{\Gamma}(e).
\]
\item 
Let $n$ be a positive integer. 
An \textit{$n$-dimensional periodic graph} $(\Gamma, L)$ is 
a free abelian group $L \simeq \mathbb{Z}^n$ of rank $n$ and a graph $\Gamma$ with an $L$-action satisfying the following two conditions:
\begin{itemize}
\item 
The $L$-actions on $V_\Gamma$ and $E_\Gamma$ are free, i.e., no non-trivial element of $G$ fixes an element of $V_{\Gamma}$ and $E_{\Gamma}$. 
\item
Their quotient sets $V_\Gamma / L$ and $E_\Gamma / L$ are finite sets.
\end{itemize}
\end{enumerate}
\end{defi}

The growth series of periodic graphs are known to be rational functions. 

\begin{thm}[\cite{NSMN21}*{Theorem 2.2}]\label{thm:NSMN}
Let $(\Gamma, L)$ be a periodic graph, and let $x_0 \in V_{\Gamma}$. 
Then, its growth sequence $(f_{\Gamma, x_0}(i))_i$ is quasi-polynomial type. 
In particular, its growth series $\mathbb{S}_{\Gamma, x_0}(t)$ is a rational function. 
\end{thm}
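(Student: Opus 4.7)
The plan is to reduce, via the finite quotient $\bar\Gamma := \Gamma/L$, to counting lattice points in level sets of a semi-linear subset of $\mathbb{Z}_{\ge 0} \times L$, and then to invoke quasi-polynomiality of such slice-counts.

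First I would pick orbit representatives $y_1, \ldots, y_m \in V_\Gamma$ of $V_\Gamma/L$, and lift each edge $\bar e$ of $\bar\Gamma$ to an edge of $\Gamma$ starting at the chosen representative of its source orbit. This attaches to $\bar e$ a \emph{translation label} $\ell(\bar e) \in L$, namely the unique element such that the lift terminates at $\ell(\bar e) \cdot y_{j(\bar e)}$ where $y_{j(\bar e)}$ represents the target orbit. Walks in $\Gamma$ from $x_0$ correspond bijectively to walks in $\bar\Gamma$ from $\bar x_0$: the weight is preserved, and a walk whose projection terminates at $\bar y_j$ ends at $\ell \cdot y_j$, where $\ell$ is the sum of the translation labels of the edges used.

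For each $j$, define the achievability set
\[
A_j := \{(w, \ell) \in \mathbb{Z}_{\ge 0} \times L \mid \text{some walk from } x_0 \text{ to } \ell \cdot y_j \text{ has weight } w\}.
\]
Walks in the finite graph $\bar\Gamma$ from $\bar x_0$ to $\bar y_j$ form a regular language on the alphabet $E_{\bar\Gamma}$, and $A_j$ is its image under the monoid homomorphism $E_{\bar\Gamma}^* \to \mathbb{Z}_{\ge 0} \times L$ sending an edge to (its weight, its translation label). Hence $A_j$ is a rational subset of the finitely generated commutative monoid $\mathbb{Z}_{\ge 0} \times L \simeq \mathbb{Z}_{\ge 0} \times \mathbb{Z}^n$, and by the Eilenberg--Schützenberger / Ginsburg--Spanier theorem it is \emph{semi-linear}. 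The graded growth set
\[
B_j := \{(i, \ell) \in \mathbb{Z}_{\ge 0} \times L \mid d_\Gamma(x_0, \ell \cdot y_j) \le i\} = A_j + (\mathbb{Z}_{\ge 0} \times \{0\})
\]
is therefore also semi-linear.

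Finally, for any semi-linear $S \subset \mathbb{Z}_{\ge 0} \times \mathbb{Z}^n$ with finite horizontal slices, the slice-count $i \mapsto \#\{v \mid (i,v) \in S\}$ is of quasi-polynomial type. Applying this to each $B_j$ makes $G_j(i) := \#\{\ell \in L \mid d_\Gamma(x_0, \ell \cdot y_j) \le i\}$ quasi-polynomial, whence so is $f_{\Gamma, x_0}(i) = \sum_j (G_j(i) - G_j(i-1))$, and rationality of $\mathbb{S}_{\Gamma, x_0}(t)$ follows. The main obstacle is this last step: the semi-linear decomposition of $B_j$ is typically not disjoint, and the generators of its linear pieces mix weight and translation directions. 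To make the count rigorous one would refine the decomposition into Stanley-type pieces over fundamental parallelepipeds of pointed affine sub-monoids, so that each slice count becomes a genuine lattice-point count in a dilate of a rational polytope, to which Ehrhart's quasi-polynomiality theorem applies, and then assemble the pieces by inclusion--exclusion.
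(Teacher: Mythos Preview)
Your proposal is correct and reaches the same destination as the paper, but by a genuinely different route.

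The paper (via the appendix, which reproduces the argument of \cite{NSMN21}) obtains the structure of the graded growth set by \emph{explicit combinatorics on the quotient graph}: it stratifies walks from $x_0$ according to the support $S=\operatorname{supp}(\overline{p})\subset V_{\Gamma/L}$ of their projection, proves by hand (decomposing a walk into a path plus cycles, Lemma~\ref{lem:bunkai}) that for each $S$ the resulting piece $X_S$ is a finitely generated module over the monoid $M_S$ generated by $(1,0)$ and the pairs $(w(q),\mu(\langle q\rangle))$ for cycles $q$ supported in $S$, and then concludes via the Hilbert--Serre rationality theorem (Theorem~\ref{thm:Serre}) together with inclusion--exclusion over the finite family $\{X_S\}_S$.

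You instead get semi-linearity of the achievability sets $A_j$ \emph{for free} from the Eilenberg--Sch\"utzenberger theorem, by recognising the set of walks in the finite graph $\bar\Gamma$ as a regular language and pushing it forward along the abelianisation map $\bar e\mapsto (w(\bar e),\ell(\bar e))$. This is a perfectly legitimate shortcut: it replaces the explicit path--cycle bookkeeping of Theorem~\ref{thm:NSMN_fg} by a single black-box invocation, and the semi-linear pieces you obtain are exactly finitely generated modules over finitely generated submonoids of $\mathbb{Z}_{\ge 0}\times L$, so you are in the same position as the paper before the counting step. Your final step is also fine, though you make it harder than necessary by reaching for Ehrhart's theorem on polytope dilates: the paper's Theorem~\ref{thm:Serre} (Hilbert series of a finitely generated module over a finitely generated $\mathbb{Z}_{\ge 0}^d$-graded monoid are rational) applies directly to each linear piece and to each intersection of linear pieces, so inclusion--exclusion finishes without any refinement into ``Stanley-type pieces''. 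What the paper's explicit decomposition buys, and yours does not, is a concrete description of the monoids $M_S$ in terms of cycles of $\Gamma/L$, which is useful for actually computing growth series and for the finer stratified analysis in \cite{IN2}; for the bare rationality statement, your argument is shorter.
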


\subsection{Growth series of groups}\label{subsection:gsg}
Let $G$ be a group, $S \subset G$ a finite subset, and 
$\omega: S \to \mathbb{Z}_{>0}$ a positive integer weight. 
Let $S^*$ be the set of all words from $S$. 
For a word $\sigma = s_1 \cdots s_{\ell} \in S^*$, we define 
$\omega(\sigma) := \sum_{i=1} ^{\ell} \omega(s_i)$. 
For $g \in G$, we define $\omega(g)$ as the smallest $\omega(\sigma)$ 
for all words $\sigma$ that represent $g$. 
By convention, we define $\omega(g) = \infty$ if $g$ cannot be represented by $S$. 
Then, for $i \in \mathbb{Z}_{\ge 0}$, 
$f_{G,S,\omega}(i) \in \mathbb{Z}_{\ge 0}$ is defined as
\[
f_{G,S,\omega}(i) := \# \{ g \in G \mid \omega(g) = i \}. 
\]
The sequence $(f_{G,S,\omega}(i))_i$ is called the \textit{growth sequence} of $G$ with respect to $S$ and $\omega$. 
The \textit{growth series} $\mathbb{S}_{\Gamma, S,\omega} (t)$ of $G$ with respect to $S$ and $\omega$ is its generating function 
\[
\mathbb{S}_{G, S, \omega}(t) := \sum _{i \ge 0} f_{G, S, \omega}(i) t^i. 
\]

\begin{rmk}
We need not to assume that $S$ generates $G$ even in Theorems \ref{thm:Benson} and \ref{thm:main}. 
\end{rmk}

\begin{rmk}
The (right) \textit{Cayley graph} $\Gamma = \Gamma _{G, S, \omega}$ of $G$ with respect to $S$ and $\omega$ is defined as follows:
\begin{itemize}
\item 
$V_{\Gamma} := G$. 

\item 
$E_{\Gamma} := G \times S$. 

\item 
For $e = (g, s) \in E_{\Gamma}$, we define 
\[
s_{\Gamma}(e) = g, \qquad 
t_{\Gamma}(e) = gs, \qquad
w_{\Gamma}(e) = \omega(s). 
\]
\end{itemize}
Then, the number $f_{G,S,\omega}(i)$ defined above 
coincides with the growth sequence $f_{\Gamma, 1_G}(i)$ of the graph $\Gamma$ defined in Section \ref{subsection:pg}. 

\end{rmk}

A group $G$ is called \textit{virtually abelian} if there exists an abelian subgroup $H$ of $G$ with finite index $[G:H] < \infty $. 
In \cite{Ben83}, Benson proved that the growth series of virtually abelian groups are rational functions. 

\begin{thm}[{Benson \cite{Ben83}}]\label{thm:Benson}
Let $G$ be a finitely generated virtually abelian group, $S \subset G$ a finite subset, and $\omega$ a positive integer weight. 
Then, the sequence $(f_{G, S, \omega}(i))_i$ is of quasi-polynomial type. 
In particular, its growth series $\mathbb{S}_{G, S, \omega}(t)$ is a rational function. 
\end{thm}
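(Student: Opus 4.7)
The plan is to deduce Benson's theorem from the periodic graph result Theorem \ref{thm:NSMN} by exhibiting the Cayley graph $\Gamma := \Gamma_{G, S, \omega}$ of $G$ as a periodic graph in the sense of Definition \ref{defi:pg}. Once the Cayley graph is cast as a periodic graph, the identification $f_{G,S,\omega}(i) = f_{\Gamma, 1_G}(i)$ will transfer the quasi-polynomial conclusion directly.

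First I would produce a free abelian finite-index subgroup $L \le G$. Since $G$ is virtually abelian, pick some abelian $H \le G$ with $[G:H] < \infty$. Because $G$ is finitely generated and $H$ has finite index, $H$ is finitely generated, so $H \cong \mathbb{Z}^n \oplus T$ for some $n \ge 0$ and finite abelian $T$. Let $L$ be the kernel of the projection $H \twoheadrightarrow T$, so that $L \simeq \mathbb{Z}^n$ and $[G:L] = [G:H] \cdot \#T < \infty$. Next I would let $L$ act on $\Gamma$ by left multiplication, namely $u \cdot g := ug$ on $V_\Gamma = G$ and $u \cdot (g,s) := (ug, s)$ on $E_\Gamma = G \times S$. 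A direct check shows $s_\Gamma$, $t_\Gamma$, and $w_\Gamma$ are preserved; the action is free because left multiplication on $G$ is free; and the quotients $V_\Gamma/L = G/L$ and $E_\Gamma/L = (G/L) \times S$ are finite. Hence $(\Gamma, L)$ is an $n$-dimensional periodic graph.

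Finally I would compare the two growth sequences. Walks $e_1 \cdots e_\ell$ in $\Gamma$ starting at $1_G$ correspond bijectively to words $s_1 \cdots s_\ell \in S^*$, the endpoint of the walk is the group element $s_1 \cdots s_\ell \in G$, and its weight equals $\omega(s_1 \cdots s_\ell)$. Therefore $d_\Gamma(1_G, g) = \omega(g)$ for every $g \in G$, giving $f_{G, S, \omega}(i) = f_{\Gamma, 1_G}(i)$ for all $i$. Applying Theorem \ref{thm:NSMN} to $(\Gamma, L)$ at the basepoint $1_G$ yields that $(f_{G, S, \omega}(i))_i$ is quasi-polynomial type and that $\mathbb{S}_{G, S, \omega}(t)$ is rational.

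The conceptual content lies inside Theorem \ref{thm:NSMN}; here the only genuine point of care is extracting a \emph{free} abelian finite-index subgroup (rather than merely an abelian one) so that the action on $\Gamma$ is free, as required by Definition \ref{defi:pg}. Beyond that, neither generation of $G$ by $S$ nor reachability of all vertices from $1_G$ creates an obstacle, since unreachable elements have $\omega(g) = \infty$ and contribute to no finite coefficient $f_{G,S,\omega}(i)$, mirroring exactly the situation in the periodic graph setting.
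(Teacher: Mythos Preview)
Your proposal is correct and follows essentially the same route the paper sketches in Remark~\ref{rmk:benson}: exhibit the Cayley graph $\Gamma_{G,S,\omega}$ as a periodic graph via the left action of a free abelian finite-index subgroup $L \le G$, then invoke Theorem~\ref{thm:NSMN}. The paper does not give a standalone proof of Theorem~\ref{thm:Benson} beyond this remark (it is cited as Benson's result), and your write-up simply fills in the routine details---extracting $L$ from an abelian finite-index subgroup by killing torsion, verifying freeness and finiteness of the quotient, and matching $d_\Gamma(1_G,g)$ with $\omega(g)$.
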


\begin{rmk}\label{rmk:benson}
When $G$ is a finitely generated virtually abelian group, we may take a free abelian subgroup $L < G$ of finite rank and of finite index. 
Since $L$ is a subgroup of $G$, the left $L$-action on $G$ induces a free $L$-action on the Cayley graph $\Gamma = \Gamma _{G,S,\omega}$. Furthermore, since $L$ is a subgroup of $G$ with finite index,  the quotient graph $\Gamma /L$ is a finite graph.
Therefore, the Cayley graph $\Gamma$ is a periodic graph. 
Hence, Theorem \ref{thm:Benson} can be seen as a special case of Theorem \ref{thm:NSMN}. 

Benson's original proof of Theorem \ref{thm:Benson} uses his theory of ``polyhedral sets". 
On the other hand, the proof of Theorem \ref{thm:NSMN} uses the theory of commutative monoid explained in Subsection \ref{subsection:monoid}, and the argument is quite different from Benson's argument. 
\end{rmk}

\subsection{Some facts in commutative monoid theory}\label{subsection:monoid}
The key tool of this paper is the rationality of the Hilbert series of finitely generated modules (Theorem \ref{thm:Serre}). 
This is an important theorem in the context of algebraic geometry and commutative algebra, but it is also useful in enumerative combinatorics.

A commutative monoid $M$ is called \textit{integral} if it has the cancellation property (i.e., for any $a,b,c \in M$, $a + b = a+c$ implies $b=c$). 
For a commutative monoid $M$, an \textit{$M$-module} just means a set with an $M$-action. 
An $M$-module $X$ is called \textit{finitely generated} if $X = \bigcup _{x \in X'} (M+x)$ holds for some finite subset $X' \subset X$. 
An $M$-module $X$ is called a \textit{free $M$-module} if there exists a subset $X' \subset X$ such that any $x \in X$ can be uniquely expressed as $x = a + y$ with $a \in M$ and $y \in X'$. 
For more detail on commutative monoid and its module theory, refer to \cite{BG09} and \cite{NSMN21}. 

Theorem \ref{thm:fg}(1) below is well-known in the theory of commutative monoid (cf.\ \cite{Ogu}*{Ch.\ I, Theorem 2.1.17.6}, \cite{BG09}*{Corollary 2.11}). 
Theorem \ref{thm:fg}(2) is proved in \cite{NSMN21}*{Theorem A12}. 
We note that Theorem \ref{thm:fg}(2) can be seen as a discrete version of Motzkin's theorem (\cite{BG09}*{Theorem 1.27}). 
\begin{thm}\label{thm:fg}
Let $M$ be an integral commutative monoid, and let $X$ be a free $M$-module. 
Let $N_1$ and $N_2$ be finitely generated submonoids of $M$. 
Let $Y_i \subset X$ be a finitely generated $N_i$-submodule for $i=1,2$. 
Then the following assertions hold:
\begin{enumerate}
\item 
$N_1 \cap N_2$ is a finitely generated monoid. 
\item 
$Y_1 \cap Y_2$ is a finitely generated $(N_1 \cap N_2)$-module. 
\end{enumerate}
\end{thm}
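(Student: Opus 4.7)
The plan is to pass to the group completion and reduce each assertion to a polyhedral-cone problem, where Gordan's lemma is available.

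For (1), let $N := \langle N_1 \cup N_2 \rangle \subset M$, which is itself a finitely generated integral commutative monoid. Since $M$ is cancellative, $N$ embeds in its Grothendieck group $N^{gp}$, a finitely generated abelian group. After quotienting by the (finite) torsion subgroup, I project to $\mathbb{Z}^r$; the images of $N_1$ and $N_2$ generate rational polyhedral cones $\sigma_1, \sigma_2 \subset \mathbb{R}^r$, and by Gordan's lemma the cone $\sigma_1 \cap \sigma_2$ is again rational polyhedral with $(\sigma_1 \cap \sigma_2) \cap \mathbb{Z}^r$ finitely generated. Pulling back through the finite torsion kernel then yields finite generation of $N_1 \cap N_2$.

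For (2), write $Y_i = \bigcup_{y \in Y_i'}(N_i + y)$ with $Y_i'$ finite, so that
\[
Y_1 \cap Y_2 = \bigcup_{(y_1, y_2) \in Y_1' \times Y_2'} \bigl((N_1 + y_1) \cap (N_2 + y_2)\bigr).
\]
Hence it suffices to treat each piece $(N_1 + y_1) \cap (N_2 + y_2)$. Here the freeness of $X$ as an $M$-module is essential: choosing a basis $X' \subset X$, uniquely write $y_i = a_i + e_i$ with $a_i \in M$ and $e_i \in X'$. By uniqueness of expression, the intersection is empty unless $e_1 = e_2$, and in that case cancellation identifies it with $\bigl((N_1 + a_1) \cap (N_2 + a_2)\bigr) + e_1$ inside a single free summand of $X$. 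The remaining question is thus entirely within $M$: is the translated intersection $(N_1 + a_1) \cap (N_2 + a_2) \subset M$ a finitely generated $(N_1 \cap N_2)$-module? Passing once more to $N^{gp}$ and applying a polyhedral decomposition, it should be a finite union of translates of $N_1 \cap N_2$, giving the conclusion.

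The hard part is (2): the step from the cone-level intersection in $\mathbb{Z}^r$ to the translated-submodule statement in $M$ is precisely the ``discrete Motzkin theorem'' alluded to in the paper, and it demands more than pure Gordan-style arguments---one must show that the lattice points of a translated rational polyhedron form a finite union of shifts of the lattice points of its recession cone. Torsion in the various groupifications adds a further bookkeeping layer, but is manageable because the torsion kernels are finite.
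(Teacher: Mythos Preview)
The paper does not actually prove this theorem; it merely cites \cite{Ogu} and \cite{BG09} for part (1) and \cite{NSMN21} for part (2). So there is no in-paper argument to compare against, and your sketch has to be judged on its own.

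Your argument for (1) has a genuine gap. Passing to the real cones $\sigma_i \subset \mathbb{R}^r$ generated by $N_i$ discards information, because $N_i$ need not be saturated: in general $N_i \subsetneq \sigma_i \cap \mathbb{Z}^r$, so finite generation of $(\sigma_1 \cap \sigma_2) \cap \mathbb{Z}^r$ tells you nothing about $N_1 \cap N_2$. Concretely, take $M = \mathbb{Z}$, $N_1 = 2\mathbb{Z}_{\ge 0}$, $N_2 = 3\mathbb{Z}_{\ge 0}$: both cones equal $\mathbb{R}_{\ge 0}$, their intersection with $\mathbb{Z}$ is all of $\mathbb{Z}_{\ge 0}$, yet $N_1 \cap N_2 = 6\mathbb{Z}_{\ge 0}$. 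Your ``pulling back through torsion'' does nothing here since there is no torsion. The standard cure is to \emph{lift} rather than project: choose surjections $\phi_i \colon \mathbb{Z}_{\ge 0}^{k_i} \twoheadrightarrow N_i$ from the generators, and identify $N_1 \cap N_2$ with the image of $\mathbb{Z}_{\ge 0}^{k_1+k_2} \cap K$, where $K \subset \mathbb{Z}^{k_1+k_2}$ is the sublattice $\{(c_1,c_2) : \phi_1(c_1) = \phi_2(c_2)\}$; Gordan applies directly to that intersection of an orthant with a sublattice. The same device rescues (2): your free-basis reduction to $(N_1 + a_1) \cap (N_2 + a_2)$ inside $M$ is correct, and after lifting, this set is the image of the lattice points of a rational polyhedron in $\mathbb{R}^{k_1+k_2}$ (a coset of $K$ meeting the orthant) whose recession cone is $K_{\mathbb{R}} \cap \mathbb{R}_{\ge 0}^{k_1+k_2}$. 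The ``discrete Motzkin'' fact you invoke---that the lattice points of a rational polyhedron form a finitely generated module over the lattice points of its recession cone---is then exactly what finishes the job, and is indeed the content of the cited references; but as written, both halves of your sketch stop short of a proof.
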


The following theorem due to Hilbert and Serre is well-known in the theory of commutative algebra. 
The series $H_Y$ is called the \textit{Hilbert series}.   
\begin{thm}[{cf.\ \cite{BG09}*{Theorem 6.37}}]\label{thm:Serre}
Let $d$ be a positive integer. 
Let $M$ be a commutative monoid, and let $X$ be an $M$-module. 
Let $N' \subset \mathbb{Z}_{> 0} ^d \times M$ be a finite subset, and 
let $N \subset \mathbb{Z}_{\ge 0} ^d \times M$ be the submonoid generated by $N'$. 
Let $Y \subset \mathbb{Z}_{\ge 0}^d \times X$ be a finitely generated $N$-submodule. 
We define a function $h_Y$ by 
\[
h_Y: \mathbb{Z}_{\ge 0}^d \to \mathbb{Z}_{\ge 0}; \quad  {\bf a} \mapsto \# \{ x \in X \mid ({\bf a},x) \in Y \}
\]
Then, the series 
\[
H_Y ({\bf t}) := 
\sum _{{\bf a} \in \mathbb{Z}_{\ge 0}^d} h_Y({\bf a}) {\bf t}^{\bf a} \in \mathbb{Z}[[t_1, \ldots , t_d]]
\]
is a rational function. 
Here, ${\bf t}^{\bf a}$ denotes $t_1 ^{a_1} \cdots t_d ^{a_d}$ for ${\bf a} = (a_1, \ldots , a_d)$. 
\end{thm}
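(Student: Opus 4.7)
The plan is to interpret $H_Y({\bf t})$ as the multigraded Hilbert series of a finitely generated graded module over a Noetherian $\mathbb{Z}^d$-graded algebra, and then derive rationality from the classical (multigraded) Hilbert--Serre theorem.

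Fix a field $k$ and form the monoid algebra $A := k[N]$, with $k$-basis $\{T_n\}_{n \in N}$ and product $T_n T_{n'} := T_{n+n'}$. Since $N$ is generated as a monoid by the finite set $N' = \{(c_1, m_1), \ldots, (c_r, m_r)\}$, the algebra $A$ is generated over $k$ by $\{f_i := T_{(c_i,m_i)}\}_{i=1}^r$ and hence is Noetherian. Equip $A$ with the $\mathbb{Z}^d$-grading $\deg T_{(c,m)} := c$; because $c_i \in \mathbb{Z}_{>0}^d$, $A$ is positively $\mathbb{Z}^d$-graded with $A_0 = k$. Next, set $V := \bigoplus_{(a,x) \in Y} k\cdot e_{(a,x)}$ with $A$-action $T_{(c,m)} \cdot e_{(a,x)} := e_{(a+c,\, x+m)}$ (well-defined because $Y$ is an $N$-submodule of $\mathbb{Z}_{\ge 0}^d \times X$), and put $\deg e_{(a,x)} := a$. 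Then $V$ is a $\mathbb{Z}^d$-graded $A$-module, finitely generated by the finite set $\{e_{z_j}\}$ corresponding to generators $z_1, \ldots, z_s$ of $Y$ as an $N$-module.

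The key finiteness check is that each homogeneous piece $V_{\bf a}$ is finite-dimensional. Writing any $(a,x) \in Y$ as $(a,x) = \sum_i n_i(c_i, m_i) + z_j$ for some index $j$ and some $(n_i) \in \mathbb{Z}_{\ge 0}^r$, the strict positivity $c_i \in \mathbb{Z}_{>0}^d$ forces $\sum_i n_i \le \min_\ell (a - b_j)_\ell$ (where $z_j = (b_j, y_j)$), bounding the admissible tuples $(n_i)$ componentwise and hence the possible $x = \sum_i n_i m_i + y_j$. Thus $\dim_k V_{\bf a} = h_Y({\bf a}) < \infty$, and the multigraded Hilbert series $HS(V)({\bf t}) := \sum_{\bf a} \dim_k V_{\bf a}\cdot{\bf t}^{\bf a}$ equals $H_Y({\bf t})$.

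The final step is multigraded Hilbert--Serre, which I would prove by induction on the number $r$ of $k$-algebra generators of $A$. For the inductive step, multiplication by $f_r$ is a degree-$c_r$ endomorphism of $V$, and the degree-by-degree exact sequence
\[
0 \to K_{a-c_r} \to V_{a-c_r} \xrightarrow{\,f_r\,} V_a \to (V/f_r V)_a \to 0
\]
(where $K := \ker f_r$) implies the formal identity
\[
(1 - {\bf t}^{c_r})\, HS(V)({\bf t}) = HS(V/f_r V)({\bf t}) - {\bf t}^{c_r}\, HS(K)({\bf t}).
\]
Both $K$ and $V/f_r V$ are annihilated by $f_r$, hence are finitely generated $\mathbb{Z}^d$-graded modules over $A/(f_r)$, which is again a positively graded $k$-algebra on $r-1$ generators; by induction their Hilbert series are rational, so $HS(V) = H_Y$ is too. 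The base case $r = 0$ is immediate since then $A = k$ and $V$ is finite-dimensional. The principal obstacle I anticipate is precisely the finite-dimensionality of the pieces $V_{\bf a}$, which is where the strict positivity hypothesis $N' \subset \mathbb{Z}_{>0}^d \times M$ is used essentially; without it $A_0$ could be infinite-dimensional and $H_Y$ would not even be well-defined as a formal power series.
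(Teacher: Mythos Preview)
The paper does not give its own proof of this statement; it is stated as a known result of Hilbert--Serre type with a reference to \cite{BG09}*{Theorem 6.37}. Your argument is correct and is exactly the classical induction on the number of homogeneous algebra generators that underlies that reference, including the correct identification of where the hypothesis $N' \subset \mathbb{Z}_{>0}^d \times M$ enters (to ensure $A_0 = k$ and each $V_{\bf a}$ finite-dimensional).
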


\section{Rationality of the growth series of periodic graphs}\label{section:proof}
This section is devoted to proving Theorem \ref{thm:main_p}. 
Throughout this section, we fix an $n$-dimensional periodic graph $(\Gamma, L)$. 
We also fix $x_0 \in V_{\Gamma}$. 

We define a \textit{graded growth set} $B$ of $\Gamma$ with respect to $x_0$ by 
\[
B := \{ (i, y) \in \mathbb{Z}_{\ge 0} \times V_{\Gamma} \mid d_{\Gamma}(x_0, y) \le i \} \subset \mathbb{Z}_{\ge 0} \times V_{\Gamma}. 
\]
Note that $V_{\Gamma}$ has a free $L$-module structure by the free $L$-action on $V_{\Gamma}$. 
Therefore, $\mathbb{Z}_{\ge 0} \times V_{\Gamma}$ has a free $\bigl( \mathbb{Z}_{\ge 0} \times L \bigr)$-module structure. 
The key point of the proof of Theorem \ref{thm:main_p} is to use the following algebraic structure of $B$. 
\begin{thm}[\cite{NSMN21}]\label{thm:NSMN_decomp}
For each subset $S \subset V_{\Gamma}/L$, there exist subsets $M_S \subset \mathbb{Z}_{\ge 0} \times L$ and 
$X_S \subset \mathbb{Z}_{\ge 0} \times V_{\Gamma}$ with the following conditions: 
\begin{enumerate}
\item 
We have $B = \bigcup _{S \in 2 ^{V_{\Gamma}/L}} X_S$. 

\item 
Each $M_S$ is a finitely generated submonoid of $\mathbb{Z}_{\ge 0} \times L$. 

\item 
Each $X_S$ is a finitely generated $M_S$-module. 
\end{enumerate}
\end{thm}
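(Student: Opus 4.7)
The plan is to lift computations from $\Gamma$ to the finite quotient graph $\bar\Gamma := \Gamma/L$. Because the $L$-action on $V_\Gamma$ and $E_\Gamma$ is free, walks in $\Gamma$ starting at $x_0$ correspond bijectively to walks in $\bar\Gamma$ starting at $\bar x_0 := \overline{x_0}$: given $\bar p = \bar e_1 \cdots \bar e_k$, its lift is uniquely determined step-by-step. Fixing a set-theoretic section $\sigma: V_{\bar\Gamma} \to V_\Gamma$ with $\sigma(\bar x_0) = x_0$ and a lift of each $\bar e$ with source in $\sigma(V_{\bar\Gamma})$ associates to every edge $\bar e$ a translation $\tau(\bar e) \in L$ so that the lift of $\bar p$ has weight $w(\bar p) = \sum_i w(\bar e_i)$ and endpoint $\sigma(t(\bar p)) + \sum_i \tau(\bar e_i)$. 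In particular, $(i, y) \in B$ iff $y = \sigma(\bar v) + \ell$ for some $\bar v \in V_{\bar\Gamma}$ and $\ell \in L$, and there exists a walk $\bar p$ from $\bar x_0$ to $\bar v$ in $\bar\Gamma$ with $w(\bar p) \le i$ and $\sum_i \tau(\bar e_i) = \ell$.

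For each $S \in 2^{V_{\bar\Gamma}}$, I would set $X_S$ to be the subset of $B$ witnessed by walks $\bar p$ whose set of visited vertices equals $S$; since every walk has some vertex support, this gives $B = \bigcup_S X_S$. Let $M_S \subseteq \mathbb{Z}_{\ge 0} \times L$ be the submonoid generated by $(1, 0)$ together with the finite list of pairs $(w(\bar c), \tau(\bar c))$ for simple closed walks $\bar c$ in the induced subgraph $\bar\Gamma|_S$; this is a finite generating set because a finite graph has only finitely many simple cycles, proving conditions (1) and (2).

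The substantive task is condition (3): that $X_S$ is finitely generated over $M_S$. The approach is induction on $|S|$. Picking $\bar v \in S$, I would split each walk $\bar p$ supported on $S$ at its first and last visits to $\bar v$, writing $\bar p$ as (walk in $\bar\Gamma|_{S \setminus \{\bar v\}}$ into a neighbor of $\bar v$) followed by (closed walk at $\bar v$ in $\bar\Gamma|_S$) followed by (walk in $\bar\Gamma|_{S \setminus \{\bar v\}}$ out of $\bar v$). By the inductive hypothesis, the outer segments contribute finitely many generators modulo $M_{S \setminus \{\bar v\}} \subseteq M_S$; the closed walk at $\bar v$ decomposes into simple cycles through $\bar v$, each contributing an element of $M_S$. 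The extra generator $(1,0) \in M_S$ absorbs the slack $i - w(\bar p) \ge 0$ inherent in the defining inequality of $B$. The principal technical obstacle is ensuring that the cycle-extraction step leaves only finitely many skeleton-walk generators; this is a standard combinatorial argument about finite graphs, made precise in Appendix \ref{section:NSMN} following \cite{NSMN21}, and once executed it delivers the required finite $M_S$-generating set of $X_S$.
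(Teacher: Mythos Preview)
Your definitions of $X_S$ and $M_S$ coincide with the paper's (Definition~\ref{defi:MX}), and your arguments for (1) and (2) are the same as Lemma~\ref{lem:easy}. For (3), however, the paper does \emph{not} argue by induction on $|S|$. It first verifies $M_S + X_S \subset X_S$ by splicing a simple cycle $q$ with $\operatorname{supp}(q)\subset S$ into a witnessing walk $p$ at a shared vertex (this step, which you omit, is where one checks that $X_S$ is an $M_S$-module at all). It then shows directly that $X_S = M_S + X'_S$ for the finite set $X'_S := \{(i,y)\in X_S : i \le W\cdot(\#S)^2\}$: decompose $\overline p$ as a simple path $q_0$ plus simple cycles $q_1,\dots,q_\ell$ (Lemma~\ref{lem:bunkai}), reorder so that $q_0,q_1,\dots,q_{\ell'}$ with $\ell'\le \#S-1$ already cover $S$, lift their concatenation to a short walk $p'$ of length at most $(\#S)^2$ giving the generator in $X'_S$, and absorb the remaining cycles $q_{\ell'+1},\dots,q_\ell$ together with the slack $(i-w(p),0)$ into $M_S$.

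Your inductive sketch has a genuine gap. The suffix segment after the last visit to $\bar v$ does not start at $\bar x_0$, so it is not an element of any $X_{S'}$, and the inductive hypothesis---a statement about walks issuing from $x_0$---does not apply to it. (Separately, a closed walk at $\bar v$ decomposes into simple cycles in $\bar\Gamma|_S$, not necessarily simple cycles \emph{through} $\bar v$; this is harmless since all such cycles still contribute to $M_S$, but it is not what you wrote.) To repair the induction you would have to strengthen the hypothesis to walks between arbitrary vertex pairs and simultaneously track that the three pieces jointly have support exactly $S$; once that bookkeeping is done you are essentially carrying out the paper's path-plus-cycles decomposition by another route, so the direct bound in Theorem~\ref{thm:NSMN_fg} is cleaner.
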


\noindent
In Appendix \ref{section:NSMN}, we give constructions of $M_S$ and $X_S$, and we also give a proof of Theorem \ref{thm:NSMN_decomp} for readers' convenience. 
In this section, we fix $M_S$ and $X_S$ that satisfy the properties (1)-(3).

\begin{defi}
Let $d$ be a positive integer. 
We identify the two sets $\bigl( \mathbb{Z}_{\ge 0} \times L \bigr) ^d$ and $\mathbb{Z}_{\ge 0}^d \times L^d$ 
(resp.\  $\bigl( \mathbb{Z}_{\ge 0} \times V_{\Gamma} \bigr) ^d$ and $\mathbb{Z}_{\ge 0}^d \times V_{\Gamma}^d$) by the map 
\[
\bigl( (a_1,b_1), \ldots, (a_d, b_d) \bigr) \mapsto \bigl( (a_1, \ldots, a_d), (b_1, \ldots , b_d) \bigr). 
\]

\begin{enumerate}
\item 
For a subset $Y \subset V_{\Gamma} ^d$, 
we define $B_Y \subset \mathbb{Z}_{\ge 0} ^d \times V_{\Gamma} ^d$ by 
\[
B_Y := B ^d \cap \bigl( \mathbb{Z}_{\ge 0} ^d \times Y \bigr). 
\]

\item
For ${\bf S} = (S_1, \ldots , S_d) \in \bigl( 2^{V_{\Gamma} / L} \bigr) ^d$, we define $M_{\bf S}$ and $X_{\bf S}$ by
\begin{align*}
M_{\bf S} &:= M_{S_1} \times \cdots \times M_{S_d} \subset \bigl( \mathbb{Z}_{\ge 0} \times L \bigr) ^d, \\
X_{\bf S} &:= X_{S_1} \times \cdots \times X_{S_d} \subset \bigl( \mathbb{Z}_{\ge 0} \times V_{\Gamma} \bigr) ^d. 
\end{align*}
For subsets $N \subset L^d$ and $Y \subset V_{\Gamma} ^d$, 
we also define 
\[
M_{N, \bf S} := M_{\bf S} \cap \bigl ( \mathbb{Z}_{\ge 0} ^d \times N \bigr), \qquad 
X_{Y, \bf S} := X_{\bf S} \cap \bigl ( \mathbb{Z}_{\ge 0} ^d \times Y \bigr).  
\]
\end{enumerate}
\end{defi}

In the following lemma, we assume that $N$ is a submonoid of $L^d$. 
Since $V_{\Gamma} ^d$ is an $L^d$-module, $V_{\Gamma} ^d$ naturally has an $N$-module structure. 

\begin{lem}\label{lem:fgm}
Let $d$ be a positive integer. 
Let $N$ be a finitely generated submonoid of $L^d$. 
Let $Y$ be a finitely generated $N$-submodule of $V_{\Gamma} ^d$. 
For any subset $\Lambda \subset \bigl( 2^{V_{\Gamma} / L} \bigr) ^d$, the following assertions hold:
\begin{enumerate}
\item 
$\bigcap _{{\bf S} \in \Lambda} M_{N, \bf S}$ is a finitely generated monoid. 

\item
$\bigcap _{{\bf S} \in \Lambda} X_{Y, \bf S}$ is a finitely generated $\bigl( \bigcap _{{\bf S} \in \Lambda} M_{N, \bf S} \bigr)$-module. 
\end{enumerate}
\end{lem}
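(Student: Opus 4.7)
The plan is to reduce the intersection to a finite one, establish the single-${\bf S}$ case via Theorem \ref{thm:fg}, and then iterate.

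First, since $(\Gamma, L)$ is a periodic graph, the quotient $V_{\Gamma}/L$ is finite, hence so is $\bigl( 2^{V_{\Gamma}/L} \bigr)^d$. Thus the set $\Lambda$ is finite, and I may write $\Lambda = \{ {\bf S}^{(1)}, \ldots, {\bf S}^{(k)} \}$. Next I would set up the ambient structure: $\mathbb{Z}_{\ge 0}^d \times L^d$ is an integral commutative monoid (as the product of $\mathbb{Z}_{\ge 0}^d$ and the abelian group $L^d$), and because the $L$-action on $V_{\Gamma}$ is free by Definition \ref{defi:pg}, the induced $L^d$-action on $V_{\Gamma}^d$ is free. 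Choosing a transversal of the $L^d$-orbits in $V_{\Gamma}^d$ then exhibits $\mathbb{Z}_{\ge 0}^d \times V_{\Gamma}^d$ as a free $\bigl( \mathbb{Z}_{\ge 0}^d \times L^d \bigr)$-module, so that Theorem \ref{thm:fg} applies in this ambient monoid.

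Second, I would dispatch the base case $|\Lambda| = 1$. For a single ${\bf S} = (S_1, \ldots, S_d)$, each $M_{S_i}$ is a finitely generated submonoid of $\mathbb{Z}_{\ge 0} \times L$ by Theorem \ref{thm:NSMN_decomp}, so $M_{\bf S} = M_{S_1} \times \cdots \times M_{S_d}$ is a finitely generated submonoid of $\mathbb{Z}_{\ge 0}^d \times L^d$. Likewise $\mathbb{Z}_{\ge 0}^d \times N$ is finitely generated since $N$ is, so Theorem \ref{thm:fg}(1) gives that $M_{N, {\bf S}} = M_{\bf S} \cap \bigl( \mathbb{Z}_{\ge 0}^d \times N \bigr)$ is a finitely generated submonoid. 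Analogously, $X_{\bf S} = X_{S_1} \times \cdots \times X_{S_d}$ is a finitely generated $M_{\bf S}$-module, and $\mathbb{Z}_{\ge 0}^d \times Y$ is a finitely generated $(\mathbb{Z}_{\ge 0}^d \times N)$-module; applying Theorem \ref{thm:fg}(2) then yields that $X_{Y, {\bf S}}$ is a finitely generated $M_{N, {\bf S}}$-module.

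Third, I would induct on $k = |\Lambda|$. Suppose $M' := \bigcap_{i < k} M_{N, {\bf S}^{(i)}}$ is a finitely generated submonoid of $\mathbb{Z}_{\ge 0}^d \times L^d$ and $X' := \bigcap_{i < k} X_{Y, {\bf S}^{(i)}}$ is a finitely generated $M'$-module. Applying Theorem \ref{thm:fg}(1) to $M'$ and $M_{N, {\bf S}^{(k)}}$ (both finitely generated) gives that $M' \cap M_{N, {\bf S}^{(k)}}$ is finitely generated; applying Theorem \ref{thm:fg}(2) to $X'$ and $X_{Y, {\bf S}^{(k)}}$ inside the free module $\mathbb{Z}_{\ge 0}^d \times V_{\Gamma}^d$ gives that $X' \cap X_{Y, {\bf S}^{(k)}}$ is a finitely generated module over $M' \cap M_{N, {\bf S}^{(k)}}$, completing the induction.

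No step presents a serious obstacle; the lemma is essentially a mechanical packaging of Theorem \ref{thm:fg} with Theorem \ref{thm:NSMN_decomp}. The only point requiring any care is verifying that the ambient $\bigl( \mathbb{Z}_{\ge 0}^d \times L^d \bigr)$-module $\mathbb{Z}_{\ge 0}^d \times V_{\Gamma}^d$ is free (so that Theorem \ref{thm:fg}(2) is applicable), and this follows immediately from the freeness of the $L$-action that is built into the definition of a periodic graph.
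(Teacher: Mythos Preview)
Your proof is correct and follows essentially the same approach as the paper's: establish that each $M_{N,{\bf S}}$ (resp.\ $X_{Y,{\bf S}}$) is a finitely generated monoid (resp.\ module) by applying Theorem~\ref{thm:fg} to the product $M_{\bf S}$ (resp.\ $X_{\bf S}$) and $\mathbb{Z}_{\ge 0}^d \times N$ (resp.\ $\mathbb{Z}_{\ge 0}^d \times Y$), then iterate Theorem~\ref{thm:fg} over the finite intersection. The only difference is cosmetic---you make the induction on $|\Lambda|$ and the verification of the free-module hypothesis explicit, whereas the paper leaves these implicit (the freeness having been noted just before Theorem~\ref{thm:NSMN_decomp}).
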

\begin{proof}
For each ${\bf S} = (S_1, \ldots , S_d) \in \bigl( 2^{V_{\Gamma} / L} \bigr) ^d$, 
the product $M_{\bf S} = M_{S_1} \times \cdots \times M_{S_d}$ is a finitely generated monoid since $M_{S_1}, \ldots , M_{S_d}$ are finitely generated monoids by Theorem \ref{thm:NSMN_decomp}(2). 
Since $N$ is a finitely generated monoid, $M_{N, \bf S} = M_{\bf S} \cap \bigl ( \mathbb{Z}_{\ge 0} ^d \times N \bigr)$ is also a finitely generated monoid by Theorem \ref{thm:fg}(1). 
Therefore, the intersection $\bigcap _{{\bf S} \in \Lambda} M_{N, \bf S}$ is also a finitely generated monoid again by Theorem \ref{thm:fg}(1). 

For each ${\bf S}  = (S_1, \ldots , S_d)  \in \bigl( 2^{V_{\Gamma} / L} \bigr) ^d$, 
the product  $X_{\bf S} = X_{S_1} \times \cdots \times X_{S_d}$ is a finitely generated $M_{\bf S}$-module since each $X_{S_i}$ is a finitely generated $M_{S_i}$-module by Theorem \ref{thm:NSMN_decomp}(3). 
Since $Y$ is a finitely generated $N$-module, $X_{Y, \bf S} = X_{\bf S} \cap \bigl ( \mathbb{Z}_{\ge 0} ^d \times Y \bigr)$ is also a finitely generated $M_{N, \bf S}$-module by Theorem \ref{thm:fg}(2). 
Therefore, the intersection $\bigcap _{{\bf S} \in \Lambda} X_{Y, \bf S}$ is a finitely generated $\bigl( \bigcap _{{\bf S} \in \Lambda} M_{N, \bf S} \bigr)$-module again by Theorem \ref{thm:fg}(2). 
\end{proof}

\begin{defi}
Let $d$ be a positive integer, and let $Y \subset V_{\Gamma} ^d$ be a subset. 
For ${\bf a} = (a_1, \ldots, a_d) \in \mathbb{Z}_{\ge 0}^d$, we define 
\begin{align*}
B_{Y, \bf a} &:= \bigl \{ (y_1, \ldots, y_d) \in Y  \ \big | \ \text{$d_{\Gamma}(x_0, y_i) \le a_i$ for each $1 \le i \le d$} \bigr\}, \\
S_{Y, \bf a} &:= \bigl \{ (y_1, \ldots, y_d) \in Y \ \big | \ \text{$d_{\Gamma}(x_0, y_i) = a_i$ for each $1 \le i \le d$} \bigr\}, 
\end{align*}
where $x_0$ is the vertex of $\Gamma$ fixed throughout this section. 
Then, we define the \textit{multivariate growth series} $\mathbb{B}_{Y, x_0}({\bf z}),\ \mathbb{S}_{Y, x_0}({\bf z}) \in \mathbb{Z}[[z_1, \ldots , z_d]]$ of $Y$ by
\[
\mathbb{B}_{Y, x_0} ({\bf z}) := \sum _{{\bf a} \in \mathbb{Z}_{\ge 0} ^d} \# B_{Y, \bf a} {\bf z} ^{\bf a}, \qquad
\mathbb{S}_{Y, x_0} ({\bf z}) := \sum _{{\bf a} \in \mathbb{Z}_{\ge 0} ^d} \# S_{Y, \bf a} {\bf z} ^{\bf a}. 
\]
Here, ${\bf z}^{\bf a}$ denotes $z_1 ^{a_1} \cdots z_d ^{a_d}$ for ${\bf a} = (a_1, \ldots , a_d)$. 
\end{defi}

\begin{thm}\label{thm:main_p}
Let $d$ be a positive integer. 
Let $N$ be a finitely generated submonoid of $L^d$. 
Let $Y$ be a finitely generated $N$-submodule of $V_{\Gamma} ^d$. 
Then the multivariate growth series $\mathbb{B}_{Y, x_0}({\bf z})$ and $\mathbb{S}_{Y, x_0}({\bf z})$ of $Y$ are rational functions. 
\end{thm}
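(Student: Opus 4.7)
The plan is to reduce the claim to a finite instance of the Hilbert--Serre theorem (Theorem~\ref{thm:Serre}) via a union-intersection decomposition of $B^d$ combined with an inclusion-exclusion argument. The first observation is that, by definition, $\mathbb{B}_{Y, x_0}({\bf z}) = \sum_{\bf a} \#\{{\bf y} \in V_\Gamma^d : ({\bf a}, {\bf y}) \in B_Y\}\, {\bf z}^{\bf a}$, so it suffices to express $\#\{{\bf y} : ({\bf a}, {\bf y}) \in B_Y\}$ as a signed sum of ``Hilbert counts'' in the sense of Theorem~\ref{thm:Serre}. Applying the coordinatewise decomposition $B = \bigcup_S X_S$ from Theorem~\ref{thm:NSMN_decomp} gives $B^d = \bigcup_{{\bf S}} X_{\bf S}$, and intersecting with $\mathbb{Z}_{\ge 0}^d \times Y$ yields the finite union $B_Y = \bigcup_{{\bf S} \in (2^{V_\Gamma/L})^d} X_{Y, {\bf S}}$.

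Next, inclusion-exclusion over this finite index set gives
\[
\#\{{\bf y} : ({\bf a}, {\bf y}) \in B_Y\} = \sum_{\emptyset \ne \Lambda \subset (2^{V_\Gamma/L})^d} (-1)^{|\Lambda|+1} \#\{{\bf y} : ({\bf a}, {\bf y}) \in Z_\Lambda\},
\]
where $Z_\Lambda := \bigcap_{{\bf S} \in \Lambda} X_{Y, {\bf S}}$. Lemma~\ref{lem:fgm} is tailor-made for this: each $Z_\Lambda$ is a finitely generated module over the finitely generated monoid $P_\Lambda := \bigcap_{{\bf S} \in \Lambda} M_{N, {\bf S}} \subset \mathbb{Z}_{\ge 0}^d \times L^d$. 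Applying Theorem~\ref{thm:Serre} to the pair $(P_\Lambda, Z_\Lambda)$ (with base monoid $M = L^d$ and base $M$-module $X = V_\Gamma^d$) then produces the rational multivariate Hilbert series $\sum_{\bf a} \#\{{\bf y} : ({\bf a}, {\bf y}) \in Z_\Lambda\}\, {\bf z}^{\bf a}$. Summing over the finitely many $\Lambda$ yields the desired rationality of $\mathbb{B}_{Y, x_0}({\bf z})$.

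The step that requires the most care is the invocation of Theorem~\ref{thm:Serre}: its stated hypothesis demands that the generators of $P_\Lambda$ lie in $\mathbb{Z}_{>0}^d \times L^d$, whereas the construction of $M_S$ in Appendix~\ref{section:NSMN} only plausibly guarantees that non-identity generators have positive \emph{first} coordinate, and the product structure $M_{\bf S} = M_{S_1} \times \cdots \times M_{S_d}$ introduces generators of the form $(0, \ldots, 0, m_i, 0, \ldots, 0)$ with some zero coordinates in $\mathbb{Z}_{\ge 0}^d$. Resolving this requires either verifying that the intersected monoids $P_\Lambda$ admit a set of generators with all coordinates strictly positive, or appealing to the local finiteness of the ball $\{{\bf y} \in V_\Gamma^d : d_\Gamma(x_0, y_i) \le a_i \text{ for all } i\}$ to bypass the strict positivity hypothesis. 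Once $\mathbb{B}_{Y, x_0}({\bf z})$ is established to be rational, the relation $\#B_{Y, {\bf a}} = \sum_{{\bf a}' \le {\bf a}} \#S_{Y, {\bf a}'}$ translates into $\mathbb{S}_{Y, x_0}({\bf z}) = \prod_{i=1}^d (1 - z_i) \cdot \mathbb{B}_{Y, x_0}({\bf z})$, immediately yielding rationality of $\mathbb{S}_{Y, x_0}({\bf z})$.
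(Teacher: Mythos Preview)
Your proof is correct and follows precisely the paper's argument: decompose $B_Y$ as $\bigcup_{\bf S} X_{Y,{\bf S}}$ via Theorem~\ref{thm:NSMN_decomp}, apply inclusion--exclusion, invoke Lemma~\ref{lem:fgm} for finite generation of each intersection, then Theorem~\ref{thm:Serre} for rationality, and finish with $\mathbb{S}_{Y,x_0}({\bf z}) = \prod_{i=1}^d (1-z_i)\,\mathbb{B}_{Y,x_0}({\bf z})$. Your worry about the strict-positivity hypothesis in Theorem~\ref{thm:Serre} is well-spotted---the paper invokes the theorem in the same way without comment---and your proposed resolution via the local finiteness of the balls $B_{Y,{\bf a}}$ (which is what the positivity hypothesis is there to guarantee) is exactly what makes the Hilbert series well-defined and the standard argument go through.
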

\begin{proof}
Since we have $\mathbb{S}_{Y, x_0} ({\bf z}) = (1-z_1) \cdots (1-z_d) \mathbb{B}_{Y, x_0} ({\bf z})$, 
it is sufficient to show that $\mathbb{B}_{Y, x_0} ({\bf z})$ is a rational function. 

For each ${\bf a} \in \mathbb{Z}_{\ge 0} ^d$, 
we have $B_{Y, \bf a} = B_Y \cap \bigl( \{ {\bf a} \} \times  V_{\Gamma} ^d \bigr)$. 
Since we have $B = \bigcup _{S \in 2 ^{V_{\Gamma}/L}} X_S$ by Theorem \ref{thm:NSMN_decomp}(1), we have 
\[
B_Y = \bigcup _{{\bf S} \in ( 2^{V_{\Gamma} / L} ) ^d} X_{Y, {\bf S}}. 
\]
By the inclusion-exclusion principle, in order to show the rationality of 
\[
\mathbb{B}_{Y, x_0}({\bf z}) = 
\sum _{{\bf a} \in \mathbb{Z}_{\ge 0} ^d} \# \Biggl( \Biggl( \bigcup _{{\bf S} \in ( 2^{V_{\Gamma} / L} ) ^d} X_{Y, {\bf S}} \Biggr) \cap \bigl( \{ {\bf a} \} \times  V_{\Gamma} ^d \bigr) \Biggr) {\bf z}^{\bf a} , 
\]
it is sufficient to show the rationality of the series
\[
H _{\Lambda} ({\bf z}) := \sum _{{\bf a} \in \mathbb{Z}_{\ge 0} ^d} \# \left( \left( \bigcap _{{\bf S} \in \Lambda} X_{Y, \bf S} \right) \cap \bigl( \{ {\bf a} \} \times  V_{\Gamma} ^d \bigr) \right) {\bf z} ^{\bf a}
\]
for each subset $\Lambda \subset \bigl( 2^{V_{\Gamma} / L} \bigr) ^d$. 
By Theorem \ref{thm:Serre}, the rationality of each $H _{\Lambda}({\bf z})$ follows from Lemma \ref{lem:fgm}.
\end{proof}

\section{Rationality of the multivariate growth series of algebraic sets of virtually abelian groups}\label{section:proof2}

Following \cite{EL22}, we introduce a concept called ``algebraic sets'' for groups. 
\begin{defi}\label{defi:as}
Let $G$ be a group, and let $d$ be a positive integer. 

\begin{enumerate}
\item 
A \textit{finite system of equations} in $G$ is a finite subset of $G * F_d$, where $F_d$ is the free group generated by $\{ X_1, \ldots , X_d \}$. 

\item
For $w \in G * F_d$ and $(g_1, \ldots , g_d) \in G^d$, we define $w(g_1, \ldots , g_{d}) \in G$ as follows: 
Let $f_1: F_d \to G$ be the group homomorphism satisfying $f_1(X_i) = g_i$ for each $1 \le i \le d$. 
Let $f_2: G * F_d \to G$ be the group homomorphism obtained by extending both $f_1$ and the identity map $G \to G$. 
Then, we define $w(g_1, \ldots , g_{d}) := f_2 (w) \in G$. 

\item
For a finite system $W$ of equations in $G$, 
$(g_1, \ldots , g_d) \in G^d$ is called a \textit{solution} of $W$ when $w(g_1, \ldots, g_d) = 1_G$ holds for all $w \in W$.

\item 
A subset $U \subset G^d$ is called \textit{algebraic} if there exists a finite system $W$ of equations in $G$ such that $U$ is the set of solutions of $W$.  
\end{enumerate}
\end{defi}

\begin{defi}\label{defi:mgs}
Let $G$ be a group, $S \subset G$ be a finite subset, and $\omega$ be a positive integer weight. 
Let $d$ be a positive integer, and let $U \subset G^d$ be a subset. 
\begin{enumerate}
\item We define \textit{multivariate (relative) growth series} $\mathbb{S}_{U, S, \omega}({\bf z})$ of $U$ by
\[
\mathbb{S}_{U, S, \omega}({\bf z}) := \sum _{{\bf x} \in U} z_1^{\omega(x_1)} z_2^{\omega(x_2)} \cdots z_d^{\omega(x_d)} \in \mathbb{Z}[[z_1, \ldots, z_d]]. 
\]
Here, by convention, we define $z_i ^{\omega (x_i)} = 0$ if $\omega (x_i) = \infty$. 

\item We define \textit{univariate (relative) growth series} $\mathbb{S}^{\rm uni}_{U, S, \omega}(t)$ of $U$ by
\[
\mathbb{S}^{\rm uni}_{U, S, \omega}(t) := \sum _{{\bf x} \in U} t^{\omega ({\bf x})} \in \mathbb{Z}[[t]], 
\]
where $\omega ({\bf x}) := \sum _{i = 1} ^{\ell} \omega (x_i)$. 
By definition, we have $\mathbb{S}^{\rm uni}_{U, S, \omega}(t) = \mathbb{S}_{U, S, \omega}(t, \ldots, t)$. 
\end{enumerate}
\end{defi}

In \cite{EL22}*{Corollary 4.16}, Evetts and Levine prove that algebraic subsets of virtually abelian groups are ``coset-wise polyhedral'' (see \cite{EL22}*{Definition 4.12}). 
By using the language of commutative monoids and modules, their result can be described as follows. 

\begin{prop}[\cite{EL22}, \cite{CE}]\label{prop:decomp}
Let $G$ be a finitely generated virtually abelian group with a free abelian normal subgroup $L$ with a finite index. 
Let $U \subset G^d$ be an algebraic subset. 
Then, $U$ can decompose as a finite disjoint union $U = \bigsqcup _{i = 1} ^{\ell} U_i$ so that 
each $U_i$ is a finitely generated $N_i$-module for some finitely generated submonoid $N_i \subset L^d$.  
\end{prop}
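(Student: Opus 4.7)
The plan is to partition $G^d$ according to cosets of $L^d$ and to show that, on each such coset, the defining equations of $U$ become a finite system of $\mathbb{Z}$-linear equations on $L^d \cong \mathbb{Z}^{nd}$; the solution set of such a system is an affine sublattice, which is a cyclic module over a finitely generated submonoid of $L^d$. To set things up, I fix coset representatives $g_1,\ldots,g_r$ with $G=\bigsqcup_{j=1}^r Lg_j$, and parameterize each product of cosets in $G^d$ by $L^d$ via $\psi_{\mathbf{j}}\colon (l_1,\ldots,l_d)\mapsto (l_1 g_{j_1},\ldots,l_d g_{j_d})$ for each $\mathbf{j}\in\{1,\ldots,r\}^d$. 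Under this parameterization, componentwise left multiplication on $G^d$ by $L^d$ corresponds to the group operation on $L^d$, so it suffices to show that each preimage $\psi_{\mathbf{j}}^{-1}(U)\subset L^d$ is a finite disjoint union of finitely generated modules over finitely generated submonoids of $L^d$.

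The core step is to analyze one equation $w(X_1,\ldots,X_d)=1_G$ with $w \in G*F_d$ after substituting $X_i = l_i g_{j_i}$. Since $L$ is normal, conjugation by any $g\in G$ preserves $L$ and induces an automorphism of $L$; since $L$ has finite index, only finitely many such automorphisms arise. By repeatedly using the rewriting rule $lg = g\cdot(g^{-1}lg)$ to push all of the $L$-factors in the expanded word to one side, I can put the equation in the form $\phi_1(l_{i_1})^{\epsilon_1}\cdots \phi_N(l_{i_N})^{\epsilon_N}\cdot h = 1_G$, where each $\phi_k\in\mathrm{Aut}(L)$ is a composition of such conjugations, $\epsilon_k=\pm 1$, and $h\in G$ is a constant depending on $w$ and $\mathbf{j}$. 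If $h\notin L$ the equation has no solutions on this piece; otherwise, using additive notation on the abelian group $L$, the equation collapses to a $\mathbb{Z}$-linear equation $\sum_{i=1}^d A_i(l_i) = -h$ on $L \cong \mathbb{Z}^n$, where each $A_i$ is an integer matrix obtained by summing the $\epsilon_k\phi_k$ over those $k$ with $i_k=i$.

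Collecting the contributions of all finitely many defining equations of $U$ yields a finite system of $\mathbb{Z}$-linear equations on $L^d\cong \mathbb{Z}^{nd}$, whose solution set is either empty or an affine sublattice $v+H$ for some subgroup $H\le\mathbb{Z}^{nd}$. Such $H$ is finitely generated as an abelian group, and adjoining both signs of a $\mathbb{Z}$-basis makes it a finitely generated submonoid of $L^d$; the affine set $v+H$ is then a cyclic $H$-module generated by $v$. Transporting back via $\psi_{\mathbf{j}}$ and taking the disjoint union over $\mathbf{j}$ gives the required decomposition. The main obstacle is the word-to-linear-equation reduction in the previous paragraph: making it clean requires carefully tracking how the automorphisms $\phi_k$ depend on the coset representatives and on the position of each variable in $w$, but normality of $L$ keeps every conjugate $g^{-1}lg$ inside $L$, and commutativity of $L$ collapses the resulting product into a genuine linear combination; after that, the remaining work is essentially linear algebra over $\mathbb{Z}$.
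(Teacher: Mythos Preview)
Your argument is correct. The reduction of each equation $w=1_G$ to a $\mathbb{Z}$-linear equation on $L^d$ via normality of $L$ and commutativity is the standard trick, and the resulting solution set on each coset is indeed an affine sublattice $v+H$, which is a cyclic module over the finitely generated monoid $H$ (with $H$ generated as a monoid by $\pm$ a $\mathbb{Z}$-basis).

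The paper takes a different, citation-based route: it invokes \cite{EL22}*{Corollary 4.16} to say that any algebraic set is \emph{coset-wise polyhedral}, i.e.\ of the form $\bigsqcup_j P_j t^{(j)}$ with each $P_j\subset L^d$ a polyhedral set, and then appeals to \cite{CE}*{Proposition 3.11, Lemma 3.10} to decompose each polyhedral set into finitely many pieces of the form $c+\mathbb{Z}_{\ge 0}u_1+\cdots+\mathbb{Z}_{\ge 0}u_m$. Your approach is more elementary and self-contained: by working out the linearization explicitly you see that the coset pieces are actually affine sublattices, a much simpler class than general polyhedral sets, so no further decomposition lemma is needed. The paper's route, on the other hand, plugs directly into the existing polyhedral-set machinery of Benson, Evetts--Levine, and Ciobanu--Evetts, which is designed to handle broader classes of sets (e.g.\ those cut out by inequalities or congruences) where the coset pieces genuinely need not be sublattices; for bare algebraic sets that extra generality is unnecessary, and your shortcut works.
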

\begin{proof}
By Corollary 4.16 in \cite{EL22}, any algebraic set $U \subset G^d$ can decompose as a finite disjoint union $U = \bigsqcup _{j} V_j$ so that each $V_j$ is of the form 
\[
V_j = P_j t^{(j)}
\]
for some $t^{(j)} \in G^d$ and some polyhedral set $P_j \subset L^d$ (see \cite{CE}*{Definition 3.1} for the definition of polyhedral sets). 
Proposition 3.11 and Lemma 3.10 in \cite{CE} show that any polyhedral set $P \subset L^d$ can decompose as a finite disjoint union $P = \bigsqcup _{k} Q_k$ so that each $Q_k$ is of the form 
\[
Q_k = c^{(k)} + \mathbb{Z}_{\ge 0} u^{(k)}_1 + \cdots + \mathbb{Z}_{\ge 0} u^{(k)}_{m^{(k)}} \subset L^d
\]
for some $m^{(k)} \in \mathbb{Z}_{\ge 0}$ and $c^{(k)}, u^{(k)}_1, \ldots, u^{(k)}_{m^{(k)}} \in L^d$. 
Therefore, we conclude that 
$U$ can decompose as a finite disjoint union $U = \bigsqcup _{i} U_i$ so that 
each $U_i$ is of the form 
\[
U_i = N_i t^{(i)},\quad \text{where}\ \ \  N_i = \mathbb{Z}_{\ge 0} u^{(i)}_1 + \cdots + \mathbb{Z}_{\ge 0} u^{(i)}_{m^{(i)}} \subset L^d
\]
for some $t^{(i)} \in G^d$, $m^{(i)} \in \mathbb{Z}_{\ge 0}$ and $u^{(i)}_1, \ldots, u^{(i)}_{m^{(i)}} \in L^d$. 
Here, $N_i$ is the submonoid of $L^d$ generated by finitely many elements $u^{(i)}_1, \ldots, u^{(i)}_{m^{(i)}}$, 
and $U_i$ is the $N_i$-module generated by one element $t^{(i)}$. 
\end{proof}

We prove the rationality of multivariate growth series of algebraic sets of finitely generated virtually abelian groups. 
In \cite{EL22}, Evetts and Levine prove the rationality of the univariate growth series, but the rationality of multivariate growth series remained a conjecture. 

\begin{thm}[cf.\ \cite{EL22}*{Conjecture 5.1}]\label{thm:main}
Let $G$ be a finitely generated virtually abelian group, $S \subset G$ a finite subset, and $\omega$ a positive integer weight. 
Then, for any algebraic subset $U \subset G^d$ of $G$, its multivariate growth series $\mathbb{S}_{Y, S, \omega}({\bf z})$ is a rational function. 
\end{thm}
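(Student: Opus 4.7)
The plan is to deduce Theorem \ref{thm:main} from its graph-theoretic counterpart Theorem \ref{thm:main_p} by passing to the Cayley graph. Fix a free abelian finite-index subgroup $L < G$ and form the Cayley graph $\Gamma = \Gamma_{G, S, \omega}$. As explained in Remark \ref{rmk:benson}, the left multiplication action of $L$ on $G = V_\Gamma$ turns $(\Gamma, L)$ into a periodic graph, and the action of $L$ on $V_\Gamma$ is free, so $V_\Gamma$ is a free $L$-module. Consequently $V_\Gamma^d$ is a free $L^d$-module under the componentwise action. Take $x_0 := 1_G$; then by construction of the Cayley graph, $d_\Gamma(1_G, g) = \omega(g)$ for every $g \in G$ (with both sides infinite simultaneously), so the two notions of multivariate growth series agree: for any subset $Y \subset G^d$,
\[
\mathbb{S}_{Y, S, \omega}({\bf z}) = \mathbb{S}_{Y, x_0}({\bf z}).
\]

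Next I would apply Proposition \ref{prop:decomp} to the algebraic set $U \subset G^d$, obtaining a finite disjoint decomposition $U = \bigsqcup_{i=1}^{\ell} U_i$ in which each $U_i = N_i t^{(i)}$ is a finitely generated $N_i$-submodule of $G^d = V_\Gamma^d$, for some finitely generated submonoid $N_i \subset L^d$ and some $t^{(i)} \in G^d$. Since the decomposition is disjoint, the multivariate growth series splits as
\[
\mathbb{S}_{U, S, \omega}({\bf z}) = \sum_{i=1}^{\ell} \mathbb{S}_{U_i, S, \omega}({\bf z}) = \sum_{i=1}^{\ell} \mathbb{S}_{U_i, x_0}({\bf z}).
\]
Now each summand on the right falls precisely into the framework of Theorem \ref{thm:main_p}, since $U_i$ is a finitely generated $N_i$-submodule of $V_\Gamma^d$ and $N_i$ is a finitely generated submonoid of $L^d$. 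Hence each $\mathbb{S}_{U_i, x_0}({\bf z})$ is a rational function, and a finite sum of rational functions is rational, giving the conclusion.

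The only minor care point — and what I would treat as the sole genuine obstacle — is the bookkeeping around elements $g \in G^d$ that are not representable by $S$, i.e.\ those with $\omega(x_i) = \infty$ for some coordinate. By convention in Definition \ref{defi:mgs}, such tuples contribute $0$ to $\mathbb{S}_{U_i, S, \omega}({\bf z})$, whereas in the graph $\Gamma$ they correspond to vertices at infinite distance from $x_0$, which are likewise excluded from $\mathbb{S}_{U_i, x_0}({\bf z})$ (the sums there range only over ${\bf a} \in \mathbb{Z}_{\ge 0}^d$ and $B_{Y, {\bf a}}$ consists of tuples with finite distances). Thus the two series agree on the nose, and no separate argument is needed for infinite-weight elements. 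With this identification in place, the theorem follows directly from Proposition \ref{prop:decomp} and Theorem \ref{thm:main_p}.
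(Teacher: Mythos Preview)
Your proposal is correct and follows essentially the same approach as the paper's proof: pass to the Cayley graph (a periodic graph via Remark~\ref{rmk:benson}), apply Proposition~\ref{prop:decomp} to decompose the algebraic set, and then invoke Theorem~\ref{thm:main_p} on each piece. You have simply made explicit the identifications and the bookkeeping around infinite-weight elements that the paper leaves implicit.
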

\begin{proof}
By the same argument as in Remark \ref{rmk:benson}, the Cayley graph $\Gamma = \Gamma _{G,S,\omega}$ of $G$ with respect to $S$ and $\omega$ becomes a periodic graph. 
Therefore, the assertion follows from Proposition \ref{prop:decomp} and Theorem \ref{thm:main_p}. 
\end{proof}

\appendix

\section{Algebraic structure of the graded growth set}\label{section:NSMN}
This appendix is devoted to proving Theorem \ref{thm:NSMN_decomp}. 
The proofs in this section are essentially the same as those in \cite{NSMN21}, but streamlined for mathematicians.
\subsection{Notations and Preliminaries}
Following \cite{IN1}*{Section 2}, we introduce some additional notations on graphs. 
\begin{defi}
Let $\Gamma = (V_{\Gamma}, E_{\Gamma}, s, t, w)$ be a graph. 

\begin{enumerate}
\item
For a walk $p = e_1 e_2 \cdots e_{\ell}$, we define $s(p) := s(e_1)$, $t(p) := t(e_{\ell})$, $\operatorname{length}(p)=\ell$ and 
\[
\operatorname{supp}(p) := \{ s(e_1), t(e_1), t(e_2), \ldots , t(e_{\ell}) \} \subset V_{\Gamma}. 
\]

\item 
A \textit{path} in $\Gamma$ is a walk $e_1e_2 \cdots e_{\ell}$ such that $s(e_1), t(e_1), t(e_2), \ldots, t(e_{\ell})$ are distinct. 
A walk of length $0$ is considered as a path. 

\item
A \textit{cycle} in $\Gamma$ is a walk $e_1 e_2 \cdots e_{\ell}$ with $s(e_1) = t(e_{\ell})$ 
such that $t(e_1), t(e_2), \ldots, t(e_{\ell})$ are distinct. 
A walk of length $0$ is not considered as a cycle. 
$\operatorname{Cyc}_{\Gamma}$ denotes the set of cycles in $\Gamma$. 

\item 
$C_1(\Gamma, \mathbb{Z})$ denotes the group of $1$-chains on $\Gamma$ with coefficients in $\mathbb{Z}$ 
(i.e., $C_1(\Gamma, \mathbb{Z})$ is the free abelian group generated by $E_{\Gamma}$). 
For a walk $p = e_1e_2 \cdots e_{\ell}$ in $\Gamma$, let $\langle p \rangle$ denote the $1$-chain 
$\sum _{i = 1} ^{\ell} e_i \in C_1(\Gamma, \mathbb{Z})$. 
$H_1(\Gamma, \mathbb{Z}) \subset C_1(\Gamma, \mathbb{Z})$ denotes the $1$st homology group 
(i.e., $H_1(\Gamma, \mathbb{Z})$ is the subgroup generated by $\langle p \rangle$ for $p \in \operatorname{Cyc}_{\Gamma}$). 
We refer the reader to \cite{Sunada} for more detail. 
\end{enumerate}
\end{defi}

\begin{defi}
Let $(\Gamma, L)$ be an $n$-dimensional periodic graph. 
\begin{enumerate}
\item Since $L$ is an abelian group, we use the additive notation defined by: 
for $u \in L$ and $x \in V_{\Gamma}$, we define $u+x := u(x)$ as the translation of $x$ by $u$.

\item 
The \textit{quotient graph} $\Gamma /L = (V_{\Gamma/L}, E_{\Gamma/L}, s_{\Gamma/L},t_{\Gamma/L},w_{\Gamma/L})$ is defined by $V_{\Gamma/L} := V_{\Gamma} /L$, $E_{\Gamma/L} := E_{\Gamma} /L$, 
and the functions $s_{\Gamma/L}: E_\Gamma/L \to V_\Gamma/L$, 
$t_{\Gamma/L}: E_\Gamma/L \to V_\Gamma/L$, and 
$w_{\Gamma/L}: E_\Gamma/L \to \mathbb{Z}_{>0}$ induced from $s_{\Gamma}$, $t_{\Gamma}$, and $w_{\Gamma}$.  

\item 
For any $x \in V_{\Gamma}$ and $e \in E_{\Gamma}$, 
let $\overline{x} \in V_{\Gamma /L}$ and $\overline{e} \in E_{\Gamma /L}$ denote their images in 
$V_{\Gamma /L} = V_{\Gamma} /L$ and $E_{\Gamma /L} = E_{\Gamma} /L$. 
For a walk $p = e_1 \cdots e_{\ell}$ in $\Gamma$, 
let $\overline{p} := \overline{e_1} \cdots \overline{e_{\ell}}$ denote its image in $\Gamma /L$.  

\item 
When $x, y \in V_{\Gamma}$ satisfy $\overline{x} = \overline{y}$, there exists an element $u \in L$ such that $u + x = y$. 
Since the $L$-action is free, such $u \in L$ uniquely exists. 
We denote this unique $u$ by $y - x$. 

\item 
Let $q_0$ be a path in $\Gamma /L$, and let $q_1, \ldots, q_{\ell} \in \operatorname{Cyc}_{\Gamma /L}$ be cycles. 
The sequence $(q_0, q_1, \ldots, q_{\ell})$ is called \textit{walkable} 
if there exists a walk $q'$ in $\Gamma / L$ such that 
$\langle q' \rangle = \sum _{i = 0} ^{\ell} \langle q_i \rangle$. 

\item 
Let $\mu : H_1(\Gamma/L, \mathbb{Z}) \to L$ be the group homomorphism defined in \cite{IN1}*{Definition 2.8}. 
This homomorphism is uniquely determined by the following condition (see \cite{IN1}*{Lemma 2.7}): 
For any walk $p$ in $\Gamma$ with $\overline{s(p)} = \overline{t(p)}$, we have $\mu(\langle \overline{p} \rangle) = t(p) - s(p)$. 
\end{enumerate}
\end{defi}

In the proof of Theorem \ref{thm:NSMN_decomp} in Subsection \ref{subsection:a_proof}, 
we will use the following easy lemma on a decomposition of a walk to a path and cycles. 
\begin{lem}[\cite{IN1}*{Lemma 2.12, Remark 2.13(2)}]\label{lem:bunkai}
Let $(\Gamma, L)$ be an $n$-dimensional periodic graph. 
\begin{enumerate}
\item 
For a walk $q'$ in $\Gamma / L$, there exists a walkable sequence $(q_0, q_1, \ldots , q_{\ell})$ such that 
$\langle q' \rangle = \sum _{i = 0} ^{\ell} \langle q_i \rangle$ and $\operatorname{supp}(q') = \bigcup _{i=0}^{\ell} \operatorname{supp}(q_i)$. 

\item 
Let $q_0$ be a path in $\Gamma /L$, and let $q_1, \ldots, q_{\ell} \in \operatorname{Cyc}_{\Gamma /L}$ be cycles. 
Then, $(q_0, q_1, \ldots, q_{\ell})$ is walkable if and only if 
there exists a permutation $\sigma:  \{ 1,2, \ldots, \ell \} \to \{ 1,2, \ldots, \ell \}$ such that
\[
\biggl( \operatorname{supp}(q_0) \cup \bigcup _{1 \le i \le k} \operatorname{supp}(q_{\sigma(i)}) \biggr) 
\cap \operatorname{supp}(q_{\sigma(k+1)}) \not = \emptyset
\] 
holds for any $0 \le k \le \ell -1$.

\item 
For a walk $q'$ in $\Gamma / L$ and a vertex $x_0 \in V_{\Gamma}$ satisfying $s(q') = \overline{x_0}$,
there exists the unique walk $p$ in $\Gamma$ satisfying $\overline{p} = q'$ and $s(p) = x_0$. 
\end{enumerate}
\end{lem}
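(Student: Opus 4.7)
I would prove the three parts in the order (3), (1), (2). Part (3) is the most direct: induct on the length of $q'$. At each step, given the current lifted vertex $x_{i-1}$ and the next edge $e'_i$ in $q'$, I lift $e'_i$ by taking any preimage $\tilde{e} \in E_{\Gamma}$ and translating by the unique $u \in L$ satisfying $u + s(\tilde{e}) = x_{i-1}$; such $u$ exists because $\overline{s(\tilde{e})} = s(e'_i) = \overline{x_{i-1}}$, and is unique because $L$ acts freely on $V_{\Gamma}$. Uniqueness of the whole lift follows by applying the same free-action argument edge-by-edge.

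For part (1), I would use greedy cycle-extraction. Writing the vertex sequence $v_0, \ldots, v_m$ of $q' = e_1 \cdots e_m$, find the smallest $j$ with $v_j \in \{v_0, \ldots, v_{j-1}\}$, say $v_j = v_i$; by minimality $v_0, \ldots, v_{j-1}$ are distinct, so $e_{i+1} \cdots e_j$ is a cycle. Extract it, replace $q'$ by $e_1 \cdots e_i e_{j+1} \cdots e_m$ (still a walk since $t(e_i) = v_i = v_j = s(e_{j+1})$), and iterate until what remains is a path $q_0$. The additivity $\langle q' \rangle = \sum_i \langle q_i \rangle$ and the support equality hold by construction, and walkability of the resulting sequence is witnessed by $q'$ itself.

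The forward direction of part (2) is the subtle point. Given a walk $q''$ realizing the 1-chain sum, consider the intersection graph $G$ on $\{q_0, q_1, \ldots, q_{\ell}\}$ whose edges record pairs with intersecting supports. I partition the edges of $q''$ according to which $q_i$ contributes them in the multiset decomposition; for consecutive edges $f_k, f_{k+1}$ of $q''$ coming from different contributors $q_a, q_b$, the shared vertex $t(f_k) = s(f_{k+1})$ witnesses $\{q_a, q_b\} \in G$. Since the contributor sequence along $q''$ takes every value in $\{0, \ldots, \ell\}$ and the walk is contiguous, $G$ is connected, so a BFS from $q_0$ in $G$ yields the required permutation $\sigma$. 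For the backward direction, I inductively splice cycles into a starting walk $q'_0 = q_0$: given $q'_j$ already built, the permutation condition produces a vertex $v \in \operatorname{supp}(q'_j) \cap \operatorname{supp}(q_{\sigma(j+1)})$, and I rotate $q_{\sigma(j+1)}$ to begin and end at $v$ before splicing it into $q'_j$ at an occurrence of $v$; each splice adds the corresponding 1-chain. The main obstacle is the connectivity step in the forward direction of (2): the contributor sequence along $q''$ must be exploited carefully to connect $q_0$ to every other node of $G$, since $q''$ need not traverse each $q_i$ contiguously.
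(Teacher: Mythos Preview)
The paper does not prove this lemma; it is quoted from \cite{IN1} and used as a black box in the appendix. Your sketch is the standard argument and is correct in all essentials: part~(3) is unique path-lifting for a free action, part~(1) is greedy first-repetition cycle extraction, and both directions of part~(2) are handled by the splice/intersection-graph method you describe.

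Two remarks on the forward direction of~(2). The worry you flag at the end---that $q''$ need not traverse each $q_i$ contiguously---is harmless: once you fix any multiset bijection between the edge occurrences in $q''$ and those in the $q_i$, consecutive edges of $q''$ carrying distinct labels already share a vertex belonging to both contributors, so the label sequence along $q''$ traces a walk in your graph $G$ visiting every $q_i$ of positive length; contiguity is irrelevant. The one genuine edge case is $q_0$ of length~$0$: then $q_0$ carries no edge and your labeling argument does not by itself attach it to the rest of~$G$. With ``walkable'' read literally as stated here (only $\langle q'\rangle=\sum_i\langle q_i\rangle$, no constraint on $s(q')$), the forward implication of~(2) actually fails in that situation---take $q_0$ the empty path at a vertex $v$ and $q_1$ any cycle avoiding~$v$. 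This is a wrinkle in the transcription from \cite{IN1} rather than in your strategy; in every application the paper makes of the lemma one has $s(q')=s(q_0)$, and under that convention the first edge of $q'$ supplies a contributor whose support contains $s(q_0)$, reconnecting $q_0$ to~$G$.
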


\subsection{Proof of Theorem \ref{thm:NSMN_decomp}}\label{subsection:a_proof}
In this subsection, we give a proof of Theorem \ref{thm:NSMN_decomp} ($=$ Lemma \ref{lem:easy}${}+{}$Theorem \ref{thm:NSMN_fg}). 
Throughout this subsection, we fix an $n$-dimensional periodic graph $(\Gamma, L)$. 
We also fix $x_0 \in V_{\Gamma}$. 

In Section \ref{section:proof}, we have defined a \textit{graded growth set} $B$ of $\Gamma$ with respect to $x_0$ by
\[
B := \{ (i, y) \in \mathbb{Z}_{\ge 0} \times V_{\Gamma} \mid d_{\Gamma}(x_0, y) \le i \} \subset \mathbb{Z}_{\ge 0} \times V_{\Gamma}. 
\]

\begin{defi}\label{defi:MX}
For each subset $S \subset V_{\Gamma}/L$, we define subsets $M_S \subset \mathbb{Z}_{\ge 0} \times L$ and  $X_S \subset \mathbb{Z}_{\ge 0} \times V_{\Gamma}$ as follows. 
\begin{enumerate}
\item
First, we define $M' _S \subset \mathbb{Z}_{\ge 0} \times L$ by
\[
M'_S 
:= \bigl\{ (w(q), \mu(\langle q \rangle)) \ \big | \ 
\text{$q \in \operatorname{Cyc}_{\Gamma /L}$ such that $\operatorname{supp}(q) \subset S$} \bigr\}. 
\]
Then, we define $M_S$ as the submonoid of $\mathbb{Z}_{\ge 0} \times L$ generated by $M'_S$ and $(1,0)$. 

\item
We define a subset
$X_S \subset \mathbb{Z}_{\ge 0} \times V_{\Gamma}$ by
\[
X_S 
:= 
\left\{ 
(i, y) \in \mathbb{Z}_{\ge 0} \times V_{\Gamma}
\ \middle | \ 
\begin{array}{l}
\text{There exists a walk $p$ in $\Gamma$ from $x_0$ to $y$}\\
\text{such that $w(p) \le i$ and $\operatorname{supp}(\overline{p}) = S$.}
\end{array}
\right\}. 
\]
\end{enumerate}
\end{defi}

\begin{lem}\label{lem:easy}
The following assertions hold. 
\begin{enumerate}
\item
We have $B = \bigcup _{S \in 2 ^{V_{\Gamma}/L}} X_S$. 

\item
For each subset $S \subset V_{\Gamma}/L$, 
the monoid $M_S$ is finitely generated. 
\end{enumerate}
\end{lem}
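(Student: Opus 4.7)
\textbf{Proof proposal for Lemma \ref{lem:easy}.}

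The plan is to verify both assertions directly from the definitions, with only mild bookkeeping for (2).

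For assertion (1), I would prove both inclusions separately. For the inclusion $\bigcup_S X_S \subset B$, if $(i,y) \in X_S$ for some $S$ then by definition there exists a walk $p$ in $\Gamma$ from $x_0$ to $y$ with $w(p) \le i$, and hence $d_\Gamma(x_0,y) \le w(p) \le i$, giving $(i,y) \in B$. For the reverse inclusion $B \subset \bigcup_S X_S$, if $(i,y) \in B$ then $d_\Gamma(x_0,y) \le i$, so there exists a walk $p$ in $\Gamma$ from $x_0$ to $y$ with $w(p) \le i$. Taking $S := \operatorname{supp}(\overline{p}) \in 2^{V_\Gamma / L}$, the very same walk $p$ witnesses $(i,y) \in X_S$.

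For assertion (2), the key observation is that $M_S$ is defined as the submonoid of $\mathbb{Z}_{\ge 0} \times L$ generated by $M'_S \cup \{(1,0)\}$, so it suffices to show that $M'_S$ is a finite set. This reduces to the finiteness of the set of cycles in $\Gamma/L$ with support contained in $S$. Because a cycle $q = e_1 \cdots e_\ell$ in $\Gamma/L$ is required (by the definition in the paper) to have distinct targets $t(e_1),\ldots,t(e_\ell)$, its length $\ell$ is bounded by the number of vertices of $\Gamma/L$, which is finite by the definition of periodic graph. Combined with the finiteness of $E_{\Gamma/L}$, this forces $\operatorname{Cyc}_{\Gamma/L}$ to be a finite set, and a fortiori the subset of cycles with support in $S$ is finite. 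Therefore $M'_S$ is finite, so $M_S$ is finitely generated.

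Both parts are essentially unpacking definitions; the only mildly non-trivial step is noticing that the distinct-target condition in the definition of cycle forces $\operatorname{Cyc}_{\Gamma/L}$ to be finite. I expect no real obstacle.
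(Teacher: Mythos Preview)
Your proposal is correct and follows essentially the same approach as the paper's proof: for (1) you argue both inclusions via the existence of a walk $p$ and set $S=\operatorname{supp}(\overline{p})$, exactly as the paper does (the paper merely calls the inclusion $\bigcup_S X_S\subset B$ ``trivial'' where you spell it out); for (2) you deduce finiteness of $M'_S$ from finiteness of $\operatorname{Cyc}_{\Gamma/L}$, which is precisely the paper's one-line argument, and you additionally justify why $\operatorname{Cyc}_{\Gamma/L}$ is finite.
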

\begin{proof}
We shall prove (1). 
Suppose $(i,y) \in B$. 
Then, by the definition of $B$, there exists a walk $p$ in $\Gamma$ from $x_0$ to $y$ with $w(p) \le i$. 
Then, we have $(i,y) \in X_S$ for $S = \operatorname{supp}(\overline{p})$. 
Therefore, we have $B \subset \bigcup _{S \in 2 ^{V_{\Gamma}/L}} X_S$. 
Since the opposite inclusion is trivial, the assertion (1) is proved. 

Since $\operatorname{Cyc}_{\Gamma /L}$ is a finite set, $M' _S$ is also a finite set, which proves (2). 
\end{proof}

\begin{thm}[\cite{NSMN21}*{Lemma 2.13}]\label{thm:NSMN_fg}
For each subset $S \subset V_{\Gamma}/L$, $X_S$ is a finitely generated $M_S$-module. 
\end{thm}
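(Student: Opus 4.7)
The plan is to exhibit a finite subset $G_S \subset X_S$ that generates $X_S$ as an $M_S$-module. The intuition is that any walk witnessing membership in $X_S$ can be pruned of ``excess'' cycles; each excised cycle contributes a cost lying in $M_S$, leaving a bounded essential remainder.

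First, for each $(i, y) \in X_S$, I would pick a walk $p$ from $x_0$ to $y$ with $w(p) \le i$ and $\operatorname{supp}(\overline{p}) = S$, and apply Lemma \ref{lem:bunkai}(1) to $\overline{p}$ to obtain a walkable sequence $(q_0, q_1, \ldots, q_\ell)$ with $q_0$ a path from $\overline{x_0}$ to $\overline{y}$ and $q_1, \ldots, q_\ell \in \operatorname{Cyc}_{\Gamma/L}$, satisfying $\langle \overline{p} \rangle = \sum_{j=0}^\ell \langle q_j \rangle$ and $\bigcup_{j=0}^\ell \operatorname{supp}(q_j) = S$. Since $\operatorname{supp}(q_j) \subset S$ for each $j \ge 1$, every $(w(q_j), \mu(\langle q_j \rangle))$ lies in $M'_S \subset M_S$. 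I would then reduce: whenever a cycle repeats in the list, delete its later occurrence along the walkable permutation provided by Lemma \ref{lem:bunkai}(2). Because the earlier copy has the same support, the intersection condition of Lemma \ref{lem:bunkai}(2) is preserved at every later stage and the total support remains $S$, so the truncated tuple is again walkable with support $S$. Iterating yields a walkable sequence $(q_0, q_{j_1}, \ldots, q_{j_m})$ whose cycles are pairwise distinct; since $\Gamma/L$ has only finitely many paths and finitely many cycles, only finitely many such reduced configurations occur. For each such configuration, I fix a walk $p^{\mathrm{red}}$ in $\Gamma$ starting at $x_0$ and lifting a $\Gamma/L$-walk with $1$-chain $\langle q_0 \rangle + \sum_k \langle q_{j_k} \rangle$, and record the pair $g := \bigl(w(q_0) + \sum_k w(q_{j_k}),\, t(p^{\mathrm{red}})\bigr) \in X_S$; these form the finite set $G_S$.

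Letting $R$ denote the removed indices, I would then verify the expansion
\[
(i, y) = (i - w(p))\,(1, 0) + \sum_{j \in R}\bigl(w(q_j),\, \mu(\langle q_j \rangle)\bigr) + g,
\]
which shows $(i, y) \in M_S + g$ once the endpoint identity $y - t(p^{\mathrm{red}}) = \sum_{j \in R} \mu(\langle q_j \rangle)$ is established. This endpoint identity is the crux, and follows from the more general claim that, for any two walks $p_1, p_2$ in $\Gamma$ starting at $x_0$ with $\overline{t(p_1)} = \overline{t(p_2)}$, one has $t(p_1) - t(p_2) = \mu(\langle \overline{p_1} \rangle - \langle \overline{p_2} \rangle)$. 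I would prove this by choosing any auxiliary walk $r$ in $\Gamma/L$ from the common endpoint back to $\overline{x_0}$, lifting $r$ at $t(p_1)$ and at $t(p_2)$ (the two lifts differ by $t(p_1) - t(p_2)$ in $L$ by $L$-equivariance), and applying the defining property $\mu(\langle \overline{(\cdot)} \rangle) = t(\cdot) - s(\cdot)$ to the two resulting walks in $\Gamma$, both of which are closed modulo $L$. This endpoint identity is the main obstacle; the rest of the argument is essentially bookkeeping.
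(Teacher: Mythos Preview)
Your approach is essentially the paper's: decompose $\overline{p}$ into a path and cycles via Lemma~\ref{lem:bunkai}, peel off excess cycles (each contributing an element of $M'_S$) while preserving walkability and full support $S$, and note that only finitely many reduced configurations remain. The paper peels differently---it reorders so that the first $\ell'$ cycles already cover $S$ and discards the rest, obtaining the explicit bound $i \le W\cdot(\#S)^2$ on generators---but your duplicate-removal variant works equally well.

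Two gaps, however. First, you never verify $M_S + X_S \subset X_S$; without this you only get $X_S \subset M_S + G_S$, not equality, and the phrase ``generates $X_S$ as an $M_S$-module'' presupposes the module structure. This is the easy half of the paper's proof and follows by inserting a cycle $q$ with $\operatorname{supp}(q)\subset S$ into the witnessing walk. Second, and more substantive: your proof of the endpoint identity assumes an auxiliary walk $r$ in $\Gamma/L$ from $\overline{y}$ back to $\overline{x_0}$, but $\Gamma/L$ is directed and there is no strong-connectivity hypothesis, so such an $r$ need not exist. The identity itself is correct, but you must argue differently. One clean fix: choose a section $\sigma\colon V_{\Gamma/L}\to V_\Gamma$ with $\sigma(\overline{x_0})=x_0$, set $\nu(\overline{e}):=\bigl(t(\tilde e)-\sigma(\overline{t(\tilde e)})\bigr)-\bigl(s(\tilde e)-\sigma(\overline{s(\tilde e)})\bigr)$ for any lift $\tilde e$ of $\overline{e}\in E_{\Gamma/L}$, and extend linearly to $C_1(\Gamma/L,\mathbb{Z})$. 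A telescoping sum shows $t(p)=\sigma(\overline{t(p)})+\nu(\langle\overline{p}\rangle)$ for every walk $p$ in $\Gamma$ from $x_0$, so the lift endpoint depends only on the $1$-chain and the terminal vertex in $\Gamma/L$; your identity $t(p)-t(p^{\mathrm{red}})=\sum_{j\in R}\mu(\langle q_j\rangle)$ then follows immediately since $\nu|_{H_1}=\mu$.
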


\begin{proof}
First, we prove $M_S + X_S \subset X_S$. 
Take $(i,y) \in X_S$. 
Then, by the definition of $X_S$, there exists a walk $p$ in $\Gamma$ from $x_0$ to $y$ satisfying 
$w(p) \le i$ and $\operatorname{supp}(\overline{p}) = S$. 
Take $q \in \operatorname{Cyc}_{\Gamma /L}$ such that $\operatorname{supp}(q) \subset S$. 
Since $\operatorname{supp}(q) \cap \operatorname{supp}(\overline{p}) \not= \emptyset$, 
by Lemma \ref{lem:bunkai}(2)(3), 
there exists a walk $p'$ in $\Gamma$ from $x_0$ such that 
\[
\langle \overline{p'} \rangle 
= \langle q \rangle + \langle \overline{p} \rangle, \qquad
\operatorname{supp}(\overline{p'}) 
= \operatorname{supp}(q) \cup \operatorname{supp}(\overline{p}) = S. 
\]
Then, we have 
\[
t(p') 
= \mu(\langle q \rangle) + t(p)
= \mu(\langle q \rangle) + y. 
\]
Furthermore, we have 
\[
w(p') = w(q) + w(p) \le w(q) + i. 
\]
These show that $\bigl( w(q) + i, \mu(\langle q \rangle) + y \bigr) \in X_S$. 
Hence, we conclude that $M' _S + X_S \subset X_S$. 
Since it is clear that $(1,0) + X_S \subset X_S$, 
we conclude that $X_S$ is an $M_S$-module. 

Next, we show that $X_S$ is generated by 
\[
X'_{S} := \bigl\{ (i,y) \in X_S \ \big| \ i \le W \cdot (\# S)^2 \bigr\}, 
\]
where $W := \max _{e \in E_{\Gamma}} w(e)$. 
Take $(i,y) \in X_S$. 
By the definition of $X_S$, there exists a walk $p$ in $\Gamma$ from $x_0$ to $y$ satisfying 
$w(p) \le i$ and $\operatorname{supp}(\overline{p}) = S$. 
By decomposing $\overline{p}$ (Lemma \ref{lem:bunkai}(1)), 
there exists a walkable sequence $(q_0, q_1, \ldots, q_{\ell})$ such that 
$\langle \overline{p} \rangle = \sum _{i=0} ^{\ell} \langle q_i \rangle$ and 
$\bigcup _{i=0}^{\ell} \operatorname{supp}(q_i) = S$. 
By Lemma \ref{lem:bunkai}(2), by rearranging the indices of $q_1, \ldots, q_{\ell}$, 
we may assume the following condition for each $0 \le j \le \ell -1$: 
\begin{itemize}
\item 
$\left( \bigcup _{0 \le i \le j} \operatorname{supp}(q_i) \right) \cap \operatorname{supp}(q_{j+1}) \not = \emptyset$. 
\end{itemize}
Furthermore, we may also assume the following condition for each $0 \le j \le \ell -1$:
\begin{itemize}
\item
If $\bigcup _{0 \le i \le j} \operatorname{supp}(q_i) \not = S$, then 
$\bigcup _{0 \le i \le j} \operatorname{supp}(q_i) \subsetneq  \bigcup _{0 \le i \le j+1} \operatorname{supp}(q_i)$.
\end{itemize}
In particular, there exists $0 \le \ell' \le \ell$ with the following conditions:  
\begin{itemize}
\item $(q_0, q_1, \ldots, q_{\ell'})$ is a walkable sequence, 
\item $\bigcup _{0 \le i \le \ell'} \operatorname{supp}(q_i) = S$, and  
\item $\ell' \le \# S - \#(\operatorname{supp}(q_0)) = \# S - \operatorname{length}(q_0) - 1$. 
\end{itemize}
Hence, by Lemma \ref{lem:bunkai}(2)(3), there exists 
a path $p'$ in $\Gamma$ from $x_0$ such that 
\begin{align*}
\langle \overline{p'} \rangle 
= \sum _{0 \le i \le \ell'} \langle q_i \rangle , \qquad
\operatorname{supp}(\overline{p'}) 
= \bigcup _{0 \le i \le \ell'} \operatorname{supp}(q_i)
= S. 
\end{align*}
Then, since 
\begin{align*}
\operatorname{length}(p') 
&= \operatorname{length}(q_0) + \sum _{1 \le i \le \ell'} \operatorname{length}(q_i) \\
&\le \operatorname{length}(q_0) + \ell ' \cdot \# S \\
&\le \operatorname{length}(q_0) + (\# S - \operatorname{length}(q_0) - 1) \cdot \# S \\
&\le (\# S)^2, 
\end{align*}
we have 
\[
w(p') \le W \cdot \operatorname{length}(p')  \le W \cdot (\# S)^2. 
\]
Therefore, we conclude $\bigl( w(p'), t(p') \bigr) \in X'_S$. 
Since 
\begin{align*}
&i - w(p') 
= i - w(p) + \sum _{\ell' +1 \le i \le \ell} w(q_i), \\
&y-t(p')=t(p)-t(p') 
= \sum _{\ell' +1 \le i \le \ell} \mu( \langle q_i \rangle), 
\end{align*}
we have 
\[
(i,y) - \bigl( w(p'), t(p') \bigr) 
= \bigl( i-w(p) ,0 \bigr) 
+ \sum _{\ell' +1 \le i \le \ell} \bigl (  w(q_i), \mu( \langle q_i \rangle)  \bigr )
\in M_S. 
\]
Therefore, we have $X_S = M_S + X'_S$. 
Since $X'_S$ is a finite set, we conclude that $X_S$ is a finitely generated $M_S$-module. 
\end{proof}

\begin{bibdiv}
\begin{biblist*}
\bib{AC17}{article}{
   author={Antol\'in, Yago},
   author={Ciobanu, Laura},
   title={Formal conjugacy growth in acylindrically hyperboic groups},
   journal={Int. Math. Res. Not. IMRN},
   date={2017},
   number={1},
   pages={121--157},
   doi={\doi{10.1093/imrn/rnw026}},
}

\bib{Ben83}{article}{
   author={Benson, M.},
   title={Growth series of finite extensions of ${\bf Z}^{n}$ are
   rational},
   journal={Invent. Math.},
   volume={73},
   date={1983},
   number={2},
   pages={251--269},
   doi={\doi{10.1007/BF01394026}},
}

\bib{BG09}{book}{
   author={Bruns, Winfried},
   author={Gubeladze, Joseph},
   title={Polytopes, rings, and $K$-theory},
   series={Springer Monographs in Mathematics},
   publisher={Springer, Dordrecht},
   date={2009},
   doi={\doi{10.1007/b105283}},
}

\bib{Can84}{article}{
   author={Cannon, James W.},
   title={The combinatorial structure of cocompact discrete hyperbolic
   groups},
   journal={Geom. Dedicata},
   volume={16},
   date={1984},
   number={2},
   pages={123--148},
   doi={\doi{10.1007/BF00146825}},
}

\bib{CE}{article}{
   author={Ciobanu, Laura},
   author={Evetts, Alex},
   title={Rational sets in virtually abelian groups: languages and growth},
   eprint={arXiv:2205.05621v2},
   doi={\doi{10.48550/arXiv.2205.05621}},
}

\bib{CHHR16}{article}{
   author={Ciobanu, Laura},
   author={Hermiller, Susan},
   author={Holt, Derek},
   author={Rees, Sarah},
   title={Conjugacy languages in groups},
   journal={Israel J. Math.},
   volume={211},
   date={2016},
   number={1},
   pages={311--347},
   doi={\doi{10.1007/s11856-015-1274-5}},
}

\bib{DO15}{article}{
   author={Davis, Tara C.},
   author={Olshanskii, Alexander Yu.},
   title={Relative subgroup growth and subgroup distortion},
   journal={Groups Geom. Dyn.},
   volume={9},
   date={2015},
   number={1},
   pages={237--273},
   doi={\doi{10.4171/GGD/312}},
}

\bib{DS19}{article}{
   author={Duchin, Moon},
   author={Shapiro, Michael},
   title={The Heisenberg group is pan-rational},
   journal={Adv. Math.},
   volume={346},
   date={2019},
   pages={219--263},
   doi={\doi{10.1016/j.aim.2019.01.046}},
}

\bib{Eve19}{article}{
   author={Evetts, Alex},
   title={Rational growth in virtually abelian groups},
   journal={Illinois J. Math.},
   volume={63},
   date={2019},
   number={4},
   pages={513--549},
   doi={\doi{10.1215/00192082-8011497}},
}

\bib{EL22}{article}{
   author={Evetts, Alex},
   author={Levine, Alex},
   title={Equations in virtually abelian groups: languages and growth},
   journal={Internat. J. Algebra Comput.},
   volume={32},
   date={2022},
   number={3},
   pages={411--442},
   issn={0218-1967},
  doi={\doi{10.1142/S0218196722500205}},
}

\bib{GS19}{article}{
   author={Goodman-Strauss, C.},
   author={Sloane, N. J. A.},
   title={A coloring-book approach to finding coordination sequences},
   journal={Acta Crystallogr. Sect. A},
   volume={75},
   date={2019},
   number={1},
   pages={121--134},
   doi={\doi{10.1107/S2053273318014481}},
}

\bib{Gro87}{article}{
   author={Gromov, M.},
   title={Hyperbolic groups},
   conference={
      title={Essays in group theory},
   },
   book={
      series={Math. Sci. Res. Inst. Publ.},
      volume={8},
      publisher={Springer, New York},
   },
   date={1987},
   pages={75--263},
   doi={\doi{10.1007/978-1-4613-9586-7\_3}},
}

\bib{GKBS96}{article}{
  title={Algebraic description of coordination sequences and exact topological densities for zeolites},
  author={Grosse-Kunstleve, R. W},
  author={Brunner, G. O},
  author={Sloane, N. J. A},
  journal={Acta Crystallogr. Sect. A},
  volume={52},
  number={6},
  pages={879--889},
  year={1996},
  doi={\doi{10.1107/S0108767396007519}},
}

\bib{GS10}{article}{
   author={Guba, Victor},
   author={Sapir, Mark},
   title={On the conjugacy growth functions of groups},
   journal={Illinois J. Math.},
   volume={54},
   date={2010},
   number={1},
   pages={301--313},
}

\bib{IN1}{article}{
   author={Inoue, Takuya},
   author={Nakamura, Yusuke},
   title={Ehrhart theory on periodic graphs},
   journal={Algebr. Comb.},
   volume={7},
   date={2024},
   number={4},
   pages={969--1010},
   doi={\doi{10.5802/alco.367}},
}

\bib{IN2}{article}{
   author={Inoue, Takuya},
   author={Nakamura, Yusuke},
   title={Ehrhart theory on periodic graphs II: Stratified Ehrhart ring theory},
   eprint={arXiv:2310.19569v2}, 
  doi={\doi{10.48550/arXiv.2310.19569}},
}

\bib{NSMN21}{article}{
  author={Nakamura, Yusuke},
  author={Sakamoto, Ryotaro},
  author={Mase, Takafumi},
  author={Nakagawa, Junichi},
  title={Coordination sequences of crystals are of quasi-polynomial type},
  journal={Acta Crystallogr. Sect. A},
  volume={77},
  number={2},
  pages={138--148},
  year={2021},
  doi={\doi{10.1107/S2053273320016769}},
}

\bib{Ogu}{book}{
   author={Ogus, Arthur},
   title={Lectures on Logarithmic Algebraic Geometry},
   series={Cambridge Studies in Advanced Mathematics},
   volume = {178},
   publisher={Cambridge University Press, Cambridge},
   date={2018}, 
}

\bib{Sunada}{book}{
   author={Sunada, Toshikazu},
   title={Topological crystallography},
   series={Surveys and Tutorials in the Applied Mathematical Sciences},
   volume={6},
   note={With a view towards discrete geometric analysis},
   publisher={Springer, Tokyo},
   date={2013},
   doi={\doi{10.1007/978-4-431-54177-6}},
}
\end{biblist*}
\end{bibdiv}
\end{document}